\author{Jos\'{e} I. Farr\'{a}n}
\address{Departamento de Matem\'{a}tica Aplicada, Universidad de Valladolid, Escuela de Ingenier\'{\i}a Inform\'{a}tica de Segovia, Espa\~{n}a}
\email{jifarran@eii.uva.es}
\thanks{The first author is supported by the project MTM2015-65764-C3-1-P (MINECO/FEDER)}
\author{Pedro A. Garc\'{\i}a-S\'anchez}
\address{IEMath-GR and Departamento de \'Algebra, Universidad de Granada, E-18071 Granada, Espa\~na}
\email{pedro@ugr.es} 
\thanks{The second author is supported by the projects MTM2014-55367-P, FQM-343,  FQM-5849, and FEDER funds}
\author{Benjam\'{\i}n A. Heredia}
\address{Departamento de Matem\'atica e Centro de Matem\'atica e Aplica\c{c}oes (CMA), FCT, Universidade Nova de Lisboa}
\email{b.heredia@fct.unl.pt}
\thanks{The third author is supported by the Funda\c{c}\~ao para a Ci\^encia e a Tecnologia (Portuguese Foundation for Science and Technology) through the project UID/MAT/00297/2013 (Centro de Matem\'atica e Aplica\c{c}\~oes).}
\author{Micah J. Leamer}  
\thanks{The fourth author would like to thank Marco D'Anna and the rest of orginizers of the INdAM meeting: International meeting on numerical semigroups - Cortona 2014}
\email{micahleamer@gmail.com}
\thanks{The authors would like to thank V\'\i tor Hugo Fernandes for the helpful discussions during the preparation of this paper.}
\title[The second Feng-Rao number for codes coming from inductive semigroups]{The second Feng-Rao number for codes coming from telescopic semigroups}
\subjclass[2010]{20M14, 11T71, 11Y55}
\keywords{Numerical semigroups, telescopic numerical semigroups, free numerical semigroups, gluing of numerical semigroups, Ap\'ery sets, Feng-Rao numbers, Feng-Rao distances, AG codes, generalized Hamming weights}
\theoremstyle{plain}
\newtheorem{prop}{Proposition}
\newtheorem{thm}[prop]{Theorem}
\newtheorem{lem}[prop]{Lemma}
\newtheorem{cor}[prop]{Corollary}
\theoremstyle{definition}
\newtheorem{remark}[prop]{Remark}
\newtheorem{defn}[prop]{Definition}
\newtheorem{ex}[prop]{Example}
\newcommand{\Ap}{\operatorname{Ap}}
\newcommand{\D}{\operatorname{D}}
\newcommand{\sD}{\operatorname{\#D}}
\newcommand{\sAp}{\operatorname{\#Ap}}
\newcommand{\m}{\operatorname{m}}
\newcommand{\M}{\operatorname{M}}
\begin{document}

\begin{abstract}
In this manuscript we show that the second Feng-Rao number of any telescopic numerical semigroup agrees with the multiplicity of the semigroup. To achieve this result we first study the behavior of Ap\'ery sets under gluings of numerical semigroups. These results provide a bound for the second Hamming weight of one-point Algebraic Geometry codes, which improves upon other estimates such as the Griesmer Order Bound. 
\end{abstract}

\maketitle

\section*{Introduction}

In coding theory algebraic geometry codes  (AG codes for short) are  advantageous in that their parameters asymptotically exceed the Gilbert-Varshamov bound (see~\cite{HvLP}), and due to Feng and Rao \cite{f-r} we know that they may be efficiently decoded. A bound on the number of correctable errors depends on the Feng-Rao distance of the involved Weierstrass semigroup. Even though the original codes are defined over algebraic curves, the construction can avoid the explicit use of algebraic geometry, by means of arrays of codes~\cite{K-P} and order functions over algebras~\cite{HvLP}. 

We begin with a brief overview of how Feng-Rao distances  and numerical semigroups arise in coding theory.  Let $\mathbb F_q$ be the finite field with $q$ a prime power number of elements. Let $R$ be the affine coordinate ring of a curve over $\mathbb F_q$ that is absolutely irreducible, nonsingular and with a single point at infinity. Denote the point at infinity by $Q$, and let $\mathcal P=(P_1,\ldots,P_n)$ be a list of (affine) $\mathrm F_q$-rational points on the curve. The evaluation map $\mathrm{ev}_\mathcal P : R\to \mathbb F_q^n$ is defined as $\mathrm{ev}_\mathcal P(f)=(f(P_1),\ldots, f(P_n))$. Let $\mathrm v_Q:K\to\mathbb{Z}$ be the discrete valuation at $Q$, where $K$ is the quotient ring for $R$, and define $\mathrm L(aQ)=\{f\in R\mid \mathrm v_Q(f)\ge -a\}$.  The set $\Gamma:=-\mathrm v_Q(R)$ is a numerical semigroup; see \cite{ns} for the definitions and basic properties of numerical semigroups. For $a$ in $\Gamma$, let $C_a$ be the orthogonal linear space of $\mathrm{ev}_\mathcal P(\mathrm L(aQ))$ (with respect to the usual dot product). This vector space is called the \emph{one point algebraic code} defined by $R$, $a$ and $\mathcal P$. 

The minimum distance of the code $C_a$ has a lower bound given by the \emph{Feng-Rao distance} \cite{f-r}  of $a+1$.The Feng-Rao distance is defined by $\delta_{FR}(a)=\min\{ \#\mathrm D(b)\mid a\le b, b\in \Gamma\}$, where $\mathrm D(b)=\{c\in \Gamma \mid b-c\in \Gamma\}$ denotes the set of \lq\lq divisors\rq\rq \ of $b$ in $\Gamma$ (according to the terminology in~\cite{a-g-b}).  In order to avoid ambiguity about which semigroup is being used, at times we may write $\mathrm D_{\Gamma}(b)$ instead of $\mathrm D(b)$. Let $c$ and $g$ respectively denote the \emph{conductor} and \emph{genus} of $\Gamma$.  Then for $a\ge 2c-1$ we have $\delta_{FR}(a)=a+1-2g$, which is referred to as the Goppa bound. Moreover, one has $\delta_{FR}(a)\geq a+1-2g$ for $a\geq c$. 

A natural generalization of the Feng-Rao distance is the following.  Take a sequence $a_1< \cdots < a_r$ of $r$ elements in $\Gamma$, and define $\mathrm D(a_1,\ldots, a_r)=\bigcup_{i=1}^r \mathrm D(a_i)$. The $r^{th}$ \emph{generalized Feng-Rao distance} is defined as 
\[
\delta_{FR}^r(a)=\min\{ \#\mathrm D(a_1,\ldots, a_r)\mid a_1,\ldots,a_r\in \Gamma, a\le a_1<a_2< \cdots < a_r\}.
\]  
The $r^{th}$ generalized Feng-Rao distance of $a+1$ turns out to be a lower bound for the $r^{th}$ generalized Hamming weight over the code $C_a$ (see~\cite{H-P}).
We remark that the generalized Hamming weights were introduced independently by Helleseth et al. in \cite{HKM} and 
Wei in \cite{Wei}, for applications in coding theory and cryptography respectively. 

As with the case where $r=1$, \cite[Theorem 3]{F-M} shows that for fixed $r$ the asymptotic behavior of $\delta_{FR}^r$ is linear.  In particular for all $a\ge 2c-1$ we have $\delta_{FR}^r(a)=a+1-2g+\mathrm E(\Gamma,r)$ for some constant $\mathrm E(\Gamma,r)$, known as the $r^{th}$ \emph{Feng-Rao} number of $\Gamma$. 
Furthermore, as in the classical case the inequality $\delta_{FR}^r(a)\geq a+1-2g+\mathrm E(\Gamma,r)$ holds for $a\geq c$. 
In \cite[Proposition 5]{F-M} it is shown that for $g>0$ and $r\ge 2$, we have $2\le \mathrm E(\Gamma,r)\le \rho_r$, where $\rho_r$ is the $r^{th}$ smallest element of $\Gamma$.
Clearly $\mathrm E(\Gamma,1)=0$.  

In \cite{D,F-M} it is shown that 
$\mathrm E(\Gamma,2)= \min\{\#\Ap(\Gamma,x)\mid x\in \mathbb N\setminus\{0\}\}$,
where  $\Ap(\Gamma,x)=\{a\in \Gamma\mid a-x\not\in \Gamma\}$ is the \emph{Ap\'ery set} of an integer $x$ with respect to $\Gamma$.  Many of our results are dependent on this alternative formulation.  Originally in \cite{apery} Ap\'ery only considered the case where $x$ was an element of $\Gamma$.  Since then natural generalizations have been introduced in \cite{F-M} and \cite{l-gs}, which extend the study of Ap\'ery sets allowing for $x$ to be any integer. 

Previously the cases where the second Feng-Rao number of a numerical semigroup was known were fairly limited. In \cite{F-M} it is shown that when $\Gamma$ is two generated, the second Feng-Rao number is precisely the \emph{multiplicity} of the semigroup; that is, we have $\mathrm E(\Gamma,2)=\rho_2$.  Later in \cite{D} this result was generalized to show that for every $r$ and every two generated numerical semigroup we have $\mathrm E(\Gamma,r)=\rho_r$. A formula for the $r^{th}$ Feng-Rao number was also calculated for the family of numerical semigroups generated by intervals \cite{intervalos}. Additionally an expression for $\mathrm E(\Gamma,2)$ has been found for the case where $\Gamma$ is inductive \cite{inductivos}. 

It is often the case that properties of two generated numerical semigroups generalize to the class of complete intersection numerical semigroups or more generally to the class of symmetric numerical semigroups. However, even restricting to three generated complete intersection case, there are examples where $\mathrm E(\Gamma,2)\neq\rho_2$; see for instance Example \ref{ex1}. A relevant subclass of complete intersection numerical semigroups is the class of telescopic numerical semigroups, which were introduced by Kirfel and Pellikaan in \cite{K-P} in order to study Feng-Rao distances.  This family contains the set of numerical semigroups associated to irreducible plane curve singularities, which were introduced by Zariski in \cite{zar}. Based on computational evidence it has been suggested for some time that $\mathrm E(\Gamma,2)=\rho_2$  whenever $\Gamma$ is telescopic.  In our case we used the \texttt{GAP} \cite{gap} package \texttt{numericalsgps} \cite{numericalsgps} that implements procedures from \cite{a-g} to calculate $\mathrm E(\Gamma,2)$ for all telescopic numerical semigroups with genus less than 150.  In this paper with the help of auxiliary results on Ap\'ery sets over gluings of numerical semigroups, we show that the second Feng-Rao number of any telescopic numerical semigroup agrees with its multiplicity (Corollary \ref{cor:E-tel}).  It is our hope that the results from our first section will also prove useful in deriving a formula for $\mathrm E(\Gamma,2)$ for more general cases of $\Gamma$. 

Although Feng-Rao distances and Feng-Rao numbers yield important information about AG codes, it is important to note that the numbers themselves are only dependent on the associated numerical semigroup. Consequently the focus of our research has been to develop tools that allow us to calculate properties of $\Ap(\Gamma,x)$ for a broader class of numerical semigroups.  In particular we develop formulas for calculating properties of Ap\'{e}ry sets of gluings on numerical semigroups that may be derived iteratively from the numerical semigroups in the gluing. Some of these formulas generalize a specialized result appearing in \cite[Chapter 8]{ns}.
The idea of gluing was originally developed to construct curve singularities with particular properties (see for instance \cite{B-C}) and was later generalized in \cite{delorme}, and made explicit in \cite{rosales}. 
Additionally invariants such as the Frobenius number, conductor, genus type, symmetry and Hilbert series can all be recovered from the original semigroups (see for instance \cite{gluings} and the references therein).  
 In our case by understanding Ap\'ery sets under gluings, we are able to develop tools for calculating the second Feng-Rao number for gluings of numerical semigroups in certain cases. In addition we feel that understanding the construction of Ap\'ery sets under gluings is fundamental to the theory of numerical semigroups, and will thus yield other applications in the future.

The paper is organized as follows: In section 1 we prove fundamental results concerning gluings of numerical semigroups and Ap\'{e}ry sets. Section 2 is devoted to prove the main results of the paper on the second Feng-Rao number for free numerical semigroups, and in particular Corollary \ref{cor:E-tel}. 
Section 3 computes some interesting examples of free semigroups appearing in coding theory, namely the generalized Hermitian semigroups and the Suzuki semigroups. Finally, section 4 applies the previous results to AG codes constructed from the generalized Hermitian curves and the Suzuki curves, obtaining bounds for the second Hamming weight of these codes that are better than the one given by Kirfel-Pellikaan in~\cite{K-P} and the Griesmer order bound introduced in~\cite{D}.

\section{Gluings and Ap\'ery sets}

In order to exploit the formulation of the second Feng-Rao number in terms of Ap\'ery sets, we develop some  basic properties of Ap\'ery sets and then show how they behave under gluings. We begin with some standard definitions, which mirror the notation in \cite{ns}.  Let $\mathbb{N}$ denote the non-negative integers. A numerical semigroup is a set of the form $\Gamma=\langle n_1,\hdots n_m\rangle:=n_1\mathbb{N}+\hdots+n_m\mathbb{N}$ where $n_1,\hdots, n_m$ are positive integers, such that $\gcd(n_1,\hdots,n_m)=1$. The set $A=\{n_1,\hdots,n_m\}$  is said to be the minimal generating set for $\Gamma$ provided that $\Gamma:=\langle A\rangle\neq \langle A\setminus \{n_i\}\rangle$ for any $i$.  Let $\Gamma_1$ and $\Gamma_2$ be numerical semigroups, and choose $a_1\in\Gamma_2$ and $a_2\in \Gamma_1$, such that $\gcd(a_1,a_2)=1$ and neither $a_1$ nor $a_2$ are minimal generators.  Then the set $\Gamma=a_1\Gamma_1+a_2\Gamma_2$  is again a numerical semigroup, referred to as a \emph{gluing} of $\Gamma_1$ and $\Gamma_2$.

We can say more than \cite[Proposition 1]{cmp} or \cite[Proposition 18]{F-M}.
\begin{lem}\label{apery-size}
Given a numerical semigroup $\Gamma$ and $x\in\mathbb{Z}$, we have $\#\Ap(\Gamma,x)=x+\#\Ap(\Gamma,-x)$. Thus $\#\Ap(\Gamma,x)\geq x$ with equality only if $\Ap(\Gamma,-x)=\emptyset$ or equivalently when $x\in\Gamma$. 
\end{lem}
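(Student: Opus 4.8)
The plan is to rewrite everything in terms of a shifted copy of $\Gamma$ and then count. Directly from the definition, $a\in\Ap(\Gamma,x)$ if and only if $a\in\Gamma$ and $a\notin\Gamma+x$, so $\Ap(\Gamma,x)=\Gamma\setminus(\Gamma+x)$, and likewise $\Ap(\Gamma,-x)=\Gamma\setminus(\Gamma-x)$. Re-indexing the latter by $b=a-x$ gives $\Ap(\Gamma,-x)+x=(\Gamma+x)\setminus\Gamma$, so the asserted identity is equivalent to
\[
\#\bigl(\Gamma\setminus(\Gamma+x)\bigr)-\#\bigl((\Gamma+x)\setminus\Gamma\bigr)=x .
\]
Both sets here are finite: once $a$ is large enough that $a$ and $a-x$ both exceed the conductor of $\Gamma$, the integers $a$ and $a-x$ are non-gaps, so $a\in\Gamma$ is equivalent to $a\in\Gamma+x$. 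Hence both sets lie in an initial segment $[0,N]$ for $N$ large, and it suffices to compare the counting functions of $\Gamma$ and of $\Gamma+x$ on such a segment.

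To do this I would fix $N$ larger than the conductor of $\Gamma$ and than $|x|$, and treat first the case $x>0$. On the one hand, $(\Gamma+x)\cap[0,N]$ has the same cardinality as $\Gamma\cap[0,N-x]$ (because $\Gamma\subseteq\mathbb N$ and $x>0$), and passing from $[0,N]$ to $[0,N-x]$ discards the $x$ largest integers of $[0,N]$, all of which are non-gaps; thus $\#(\Gamma\cap[0,N])-\#((\Gamma+x)\cap[0,N])=x$. On the other hand, splitting both $\Gamma\cap[0,N]$ and $(\Gamma+x)\cap[0,N]$ according to membership in $\Gamma\cap(\Gamma+x)$ and cancelling the common term $\#\bigl(\Gamma\cap(\Gamma+x)\cap[0,N]\bigr)$ shows that this same difference equals $\#(\Gamma\setminus(\Gamma+x))-\#((\Gamma+x)\setminus\Gamma)$. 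Combining the two computations gives the displayed identity. The case $x<0$ follows by exchanging the roles of $x$ and $-x$, and $x=0$ is immediate since $\Ap(\Gamma,0)=\emptyset$.

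The remaining claims are then formal. Since $\#\Ap(\Gamma,-x)\ge 0$, the identity yields $\#\Ap(\Gamma,x)\ge x$, with equality exactly when $\Ap(\Gamma,-x)=\emptyset$. Finally, $\Ap(\Gamma,-x)=\emptyset$ means $a+x\in\Gamma$ for every $a\in\Gamma$, that is, $\Gamma+x\subseteq\Gamma$; taking $a=0$ this forces $x\in\Gamma$, while conversely $x\in\Gamma$ implies $\Gamma+x\subseteq\Gamma$ because $\Gamma$ is closed under addition. I expect the only point requiring real care to be the truncation bookkeeping in the middle paragraph — accounting correctly for the finitely many gaps near $0$ and the $x$ extra non-gaps near $N$ — but this is routine once one notes that both pieces of the symmetric difference $\Gamma\triangle(\Gamma+x)$ are confined to a single fixed finite interval.
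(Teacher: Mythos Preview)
Your argument is correct and complete. The identification $\Ap(\Gamma,x)=\Gamma\setminus(\Gamma+x)$ and $\Ap(\Gamma,-x)+x=(\Gamma+x)\setminus\Gamma$ reduces the identity to a symmetric-difference count, and your truncation to $[0,N]$ handles this cleanly; the equality case is also handled correctly. (One tiny slip: the re-indexing should be $b=a+x$, not $b=a-x$, but your conclusion $\Ap(\Gamma,-x)+x=(\Gamma+x)\setminus\Gamma$ is the right one.)

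This is a genuinely different proof from the paper's. The paper fixes $x>0$ and works one residue class modulo $x$ at a time: writing $A_i$ and $B_i$ for the elements of $\Ap(\Gamma,x)$ and $\Ap(\Gamma,-x)$ congruent to $i$, it shows that within each class the elements of $B_i$ interleave between consecutive elements of $A_i$, yielding $|A_i|=|B_i|+1$ and hence the global identity upon summing over the $x$ classes. Your approach instead compares the counting functions of $\Gamma$ and $\Gamma+x$ on a long interval. Your method is shorter and more conceptual; the paper's has the side benefit of describing explicitly how $\Ap(\Gamma,-x)$ sits inside the residue classes relative to $\Ap(\Gamma,x)$, which is structural information your counting argument does not see.
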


\begin{proof}
We restrict to the case $x>0$ and the general case follows by replacing $x$ with $-x$ in the equation $\#\Ap(\Gamma,-x)=\#\Ap(\Gamma,x)-x$. 

Let $A_i$ and $B_i$ be the subsets of $\Ap(\Gamma,x)$ and $\Ap(\Gamma,-x)$ respectively whose elements are congruent to 
$i$ modulo $x$.  Notice that $A_i$ is nonemepty, since it contains the smallest element in $\Gamma$ that is congruent to $i$ modulo $x$. Let $A_i=\{a_1,a_2,\hdots,a_k\}$ with $a_1<\hdots<a_k$.   There are no elements in $B_i$ that are greater than or equal to $a_k$ because $a_k+nx-(-x)\in\Gamma$ for all $n\geq 0$.  Similarly there are no elements in $B_i$ strictly less than $a_1$.  Hence if $k=1$, we have $B_i=\emptyset$.  If $k\geq 2$, then for each $j$ with $1\leq j<k$ there exists $n_j\in\mathbb{N}$ such that $a_j+hx\in \Gamma$  for $0\leq h\leq n_j$ and $a_j+hx\notin\Gamma$ for $n_j<h<(a_{j+1}-a_j)/x$.  It follows that $b_j=a_j+n_jx$ is the unique element of $B_i$ with $a_j\leq b_j<a_{j+1}$.  Thus $|A_i|=|B_i|+1$ and $\#\Ap(\Gamma,-x)=\#\Ap(\Gamma,x)-x$.

If $x\in\Gamma$, then a standard argument shows that $\#\Ap(\Gamma,x)=x$, and hence $\#\Ap(\Gamma,-x)$.  Conversely if $x\notin\Gamma$, then $0\in\Ap(\Gamma,-x)\neq\emptyset$.
\end{proof}

\begin{lem}\label{subadditivity}
	Let $\Gamma$ be a numerical semigroup.  Then we have the following.
	\begin{enumerate}[(a)]
		\item For $x,y\in\mathbb{Z}$ we have $\sAp(\Gamma,x+y)\leq\sAp(\Gamma,x)+\sAp(\Gamma,y).$
		\item For $g, h\in\Gamma$ we have $\Ap(\Gamma,g+h)=\Ap(\Gamma,g)\cup(g+\Ap(\Gamma,h))$, and the union is disjoint.
	\end{enumerate}
\end{lem}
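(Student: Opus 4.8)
The plan is to prove part (b) first, since part (a) for $x,y\in\mathbb{N}$ follows immediately from it, and then handle the general integer case of (a) via Lemma~\ref{apery-size}. For part (b), fix $g,h\in\Gamma$. The inclusion $\Ap(\Gamma,g)\subseteq\Ap(\Gamma,g+h)$ holds because if $a\in\Gamma$ and $a-g\notin\Gamma$, then $a-(g+h)\notin\Gamma$ as well (otherwise $a-g=(a-g-h)+h\in\Gamma$). For the other piece, if $a\in\Ap(\Gamma,h)$, I would check that $g+a\in\Ap(\Gamma,g+h)$: clearly $g+a\in\Gamma$, and $(g+a)-(g+h)=a-h\notin\Gamma$. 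Conversely, given $a\in\Ap(\Gamma,g+h)$ with $a\notin\Ap(\Gamma,g)$, we have $a-g\in\Gamma$; I must show $a-g\in\Ap(\Gamma,h)$, i.e. that $(a-g)-h\notin\Gamma$, which is exactly $a-(g+h)\notin\Gamma$, true by hypothesis. This establishes $\Ap(\Gamma,g+h)=\Ap(\Gamma,g)\cup(g+\Ap(\Gamma,h))$. For disjointness, suppose $a$ lies in both pieces: then $a\in\Ap(\Gamma,g)$ forces $a-g\notin\Gamma$, while $a\in g+\Ap(\Gamma,h)$ forces $a-g\in\Gamma$, a contradiction.

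For part (a) when $x,y\in\mathbb{N}$, the decomposition in (b) gives $\sAp(\Gamma,x+y)=\sAp(\Gamma,x)+\sAp(\Gamma,y)$ directly when $x,y\in\Gamma$; but the statement needs $x,y$ arbitrary integers, so (b) is not enough by itself. Instead I would argue as follows: for any $x,y\in\mathbb{Z}$, pick an element $s\in\Gamma$ large enough that $s+x$, $s+y$, and $2s+x+y$ all lie in $\Gamma$ (possible since $\Gamma$ contains all sufficiently large integers). By Lemma~\ref{apery-size}, $\sAp(\Gamma,x)=\sAp(\Gamma,-x)+x$ and similarly for $y$ and $x+y$. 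Translating by elements of $\Gamma$: since $s+x\in\Gamma$, applying (b) with $g=s$, $h=s+x$ (or a direct comparison) relates $\sAp(\Gamma,x)$ to $\sAp(\Gamma,s+x+s')$-type quantities. The cleanest route is probably: show $\sAp(\Gamma,x)=\sAp(\Gamma,x+g)-\#\Ap(\Gamma,g)$ is \emph{not} generally true, so instead use monotonicity — from (b), $\sAp(\Gamma,g)\le\sAp(\Gamma,g+h)$ for $g,h\in\Gamma$, hence $\sAp$ restricted to $\Gamma$ is nondecreasing along the semigroup order, and combined with Lemma~\ref{apery-size} one gets $\sAp(\Gamma,x+y)\le\sAp(\Gamma,g)$ for any $g\in\Gamma$ with $g\ge x+y$ componentwise; choosing $g=(s+x)+(s+y)$ and applying (b) twice yields $\sAp(\Gamma,x+y)\le\sAp(\Gamma,s+x)+\sAp(\Gamma,s+y)$, and then peeling off the translations-by-$s$ via Lemma~\ref{apery-size} gives the bound $\sAp(\Gamma,x)+\sAp(\Gamma,y)$.

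The main obstacle is the integer (as opposed to semigroup-element) case of part (a): the clean disjoint-union decomposition of (b) is only available for $g,h\in\Gamma$, so reducing the general case requires carefully controlling how $\sAp(\Gamma,\cdot)$ changes under adding a semigroup element and under the sign flip $x\mapsto-x$. I expect the key lemma to lean on is precisely Lemma~\ref{apery-size}, together with the observation that $a\in\Ap(\Gamma,x)$ implies the whole residue class of $a$ mod $x$ is controlled, so that adding a large $s\in\Gamma$ to $x$ only enlarges $\Ap$ in a predictable, countable way. Once the bookkeeping between $\sAp(\Gamma,x)$, $\sAp(\Gamma,x+s)$, and $\sAp(\Gamma,-x)$ is pinned down, the subadditivity inequality drops out by combining two applications of part (b) with two applications of Lemma~\ref{apery-size}.
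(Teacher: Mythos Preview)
Your proof of part (b) is correct and matches the paper's argument. The gap is in part (a): the translation-by-large-$s$ route is both unnecessary and, as you half-acknowledge, not actually completed. In particular, ``peeling off the translations by $s$'' would require an identity like $\sAp(\Gamma,x+s)=\sAp(\Gamma,x)+s$ for $s\in\Gamma$ and arbitrary $x\in\mathbb Z$, and this is false in general (try $\Gamma=\langle 2,3\rangle$, $x=-1$, $s=2$). Lemma~\ref{apery-size} alone does not give you that relation, so the bookkeeping you hope will ``drop out'' does not.

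What you are missing is much simpler, and in fact you already wrote it down. Look again at your ``converse'' step in (b): given $a\in\Ap(\Gamma,g+h)$ with $a\notin\Ap(\Gamma,g)$, you conclude $a-g\in\Gamma$ and $a-g-h\notin\Gamma$, hence $a\in g+\Ap(\Gamma,h)$. Nowhere does this use $g\in\Gamma$ or $h\in\Gamma$. So for \emph{arbitrary} integers $x,y$ you have the containment
\[
\Ap(\Gamma,x+y)\subseteq\Ap(\Gamma,x)\cup\bigl(x+\Ap(\Gamma,y)\bigr),
\]
and taking cardinalities gives (a) immediately. This is exactly the paper's argument: it proves this one-sided inclusion first (valid for all integers), reads off (a), and then uses the hypothesis $g,h\in\Gamma$ only to get the reverse inclusion and disjointness for (b).
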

\begin{proof}
	Let $z\in\Ap(\Gamma,x+y)$.  Then $z\in\Gamma$.  If $z-x\notin\Gamma$, then $z\in\Ap(\Gamma,x)$.  Otherwise $z-x\in\Gamma$ and $z-x-y\notin\Gamma$; hence $z-x\in\Ap(\Gamma,y)$ and $z\in(x+\Ap(\Gamma,y))$.  Thus we have the containment $\Ap(\Gamma,x+y)\subseteq\Ap(\Gamma,x)\cup(x+\Ap(\Gamma,y))$.  This produces the inequality in (a) and one of the necessary inclusions for (b).
	
	
	It remains to show the other inclusion of sets for part (b).  Let $z'\in\Ap(\Gamma,g)$.  Then $z'\in\Gamma$, and $z'-g\notin\Gamma$.  Since $h\in\Gamma$ we have $z'-g-h\notin\Gamma$.  Thus $z'\in\Ap(\Gamma,g+h)$.  Let $z''\in g+\Ap(\Gamma,h)$.  Then $z''-g\in\Gamma$, and $z''-g-h\notin\Gamma$.  Since $g\in\Gamma$ it follows that $z''\in\Gamma$, and thus $z''\in\Ap(\Gamma,g+h)$.
\end{proof}

\begin{lem}\label{unique-form}
Let $\Gamma = a_1\Gamma_1 + a_2 \Gamma_2$ be a gluing of numerical semigroups. Any integer $z$ can be expressed uniquely as $z=a_1k+a_2\omega$ where $k\in \mathbb{Z}$ is an integer and $\omega\in \Ap(\Gamma_2,a_1)$. Then $z$ is in the semigroup $\Gamma$ if and only if $k\in \Gamma_1$.
\end{lem}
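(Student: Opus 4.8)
The proof splits naturally into an existence/uniqueness part for the representation, followed by the characterization of membership in $\Gamma$. The two facts I will lean on throughout are (i) $\gcd(a_1,a_2)=1$, so $a_2$ is a unit modulo $a_1$, and (ii) since $a_1\in\Gamma_2$ by the definition of a gluing, the set $\Ap(\Gamma_2,a_1)$ is a complete system of residues modulo $a_1$ (it has $a_1$ elements by Lemma~\ref{apery-size}, and two elements of an Ap\'ery set that are congruent mod $a_1$ would force the larger minus $a_1$ to lie in $\Gamma_2$, a contradiction). I will also use the standard decomposition $\Gamma_2=\bigsqcup_{\omega\in\Ap(\Gamma_2,a_1)}(\omega+a_1\mathbb{N})$, which is an immediate consequence of (ii).

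\textbf{Existence and uniqueness of the form.} Fix $z\in\mathbb{Z}$. Reducing the desired identity $z=a_1k+a_2\omega$ modulo $a_1$ gives $z\equiv a_2\omega\pmod{a_1}$, hence $\omega\equiv a_2^{-1}z\pmod{a_1}$, which pins down the residue class of $\omega$ modulo $a_1$ uniquely. By fact (ii) there is exactly one $\omega\in\Ap(\Gamma_2,a_1)$ in that class, and once $\omega$ is chosen, $k=(z-a_2\omega)/a_1$ is forced and is an integer because $a_1\mid z-a_2\omega$. This simultaneously yields existence and uniqueness.

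\textbf{Membership in $\Gamma$.} If $k\in\Gamma_1$, then $a_1k\in a_1\Gamma_1$ and, since $\omega\in\Ap(\Gamma_2,a_1)\subseteq\Gamma_2$, also $a_2\omega\in a_2\Gamma_2$; adding gives $z\in a_1\Gamma_1+a_2\Gamma_2=\Gamma$. For the converse, suppose $z\in\Gamma$, say $z=a_1 s_1+a_2 s_2$ with $s_1\in\Gamma_1$ and $s_2\in\Gamma_2$. Using the decomposition of $\Gamma_2$, write $s_2=\omega'+\lambda a_1$ with $\omega'\in\Ap(\Gamma_2,a_1)$ and $\lambda\in\mathbb{N}$, so that $z=a_1(s_1+a_2\lambda)+a_2\omega'$. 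By the uniqueness just proved, $\omega=\omega'$ and $k=s_1+a_2\lambda$. Finally $s_1\in\Gamma_1$, and since $a_2\in\Gamma_1$ (again by the gluing hypothesis) we get $a_2\lambda\in\Gamma_1$; hence $k=s_1+a_2\lambda\in\Gamma_1$.

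\textbf{Main obstacle.} There is no real computational difficulty here; the one thing that has to be handled with care is the converse direction, where an \emph{arbitrary} expression of $z$ as an element of $\Gamma$ must be shown to reduce to the canonical one — this is exactly where uniqueness of the representation does the work, together with the fact that the \enquote{gluing numbers} $a_1,a_2$ belong to the \emph{other} semigroup in each case.
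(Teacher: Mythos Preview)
Your proof is correct and follows essentially the same route as the paper's: both obtain uniqueness of $\omega$ from the fact that $\Ap(\Gamma_2,a_1)$ is a complete residue system modulo $a_1$, and both handle the converse membership direction by rewriting an arbitrary expression $z=a_1s_1+a_2s_2$ via $s_2=\omega'+\lambda a_1$ and invoking uniqueness. If anything, your write-up is slightly more careful, since you explicitly justify why $\Ap(\Gamma_2,a_1)$ is a complete residue system and why $a_2\lambda\in\Gamma_1$ (using $a_2\in\Gamma_1$), whereas the paper leaves these implicit (and in fact writes $n>0$ where $n\ge 0$ is intended).
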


\begin{proof}
Since $\gcd(a_1,a_2)=1$, and $a_1\in\Gamma_2$, there exists a unique $\omega\in\Ap(\Gamma_2,a_1)$ such that $z\equiv a_2\omega \mod a_1$. That means that $z=a_1k+a_2\omega$ for a unique $k\in \mathbb{Z}$.

For the second part, it is clear that if $k\in \Gamma_1$, then $z$ is in $\Gamma$. Suppose that $z\in \Gamma$, so that $z=a_1x_1+a_2x_2$ with $x_1\in\Gamma_1$ and $x_2\in \Gamma_2$. Then $x_2\equiv \omega \mod a_1$ and then $x_2=\omega+na_1$ with $n>0$. This means that $z=a_1x_1+a_2x_2=a_1x_1+a_2(\omega+na_1)=a_1(x_1+na_2)+a_2\omega$, so by uniqueness, $k=x_1+na_2\in \Gamma_1$.
\end{proof}

\begin{defn}
  Let $\Gamma$ be a numerical semigroup and $g\in \Gamma$ an element. Let us write, for $0\leq i < g$, as $\omega(i)$ the unique element in $\Ap(\Gamma,g)$ such that $\omega(i)\equiv i \mod g$. Then define the cocycle $\mathrm h_{\Gamma,g}:\mathbb{Z}_g\times \mathbb{Z}_g \to \mathbb{Z}$ as
\[
\mathrm h_{\Gamma,g}(i,j)=(\omega(i) - \omega(i+j) + \omega(j))/g.
\]
\end{defn}

\begin{remark} \label{lemOmega}
From the cocycles  $\mathrm h_{\Gamma,g}(i,j)$ we can recover the elements of $\Ap(\Gamma,g)$ up to congruence. Indeed the formula \[\omega(i)=\sum\nolimits_{j=0}^{g-1}\mathrm{h}_{\Gamma,g}(j,i)\] follows easily from the definitions.
%
\end{remark}

\begin{lem} \label{lem-cocycle}
Let $\Gamma=a_1\Gamma_1 + a_2\Gamma_2$ be a gluing of numerical semigroups. Let $z = a_1k + a_2\omega(i)$ be any integer, with $\omega(i)\in \Ap(\Gamma_2,a_1)$. Then
\[
\Ap(\Gamma,z)=\bigcup\nolimits_{j=0}^{a_1-1}a_1\Ap(\Gamma_1,k+a_2\mathrm h_{\Gamma_2,a_1}(j-i,i))+a_2\omega(j),
\]
and
\[
\D_\Gamma(z)=\bigcup\nolimits_{j=0}^{a_1-1}a_1 \D_{\Gamma_1}( k-a_2\mathrm h_{\Gamma_2,a_1}(i-j,j))+a_2\omega(j).
\]
\end{lem}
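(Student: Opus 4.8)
The plan is to reduce both formulas to the unique representation of Lemma \ref{unique-form} combined with the definition of the cocycle $\mathrm{h}_{\Gamma_2,a_1}$. First I would write an arbitrary integer in the form $a=a_1\ell+a_2\omega(j)$ with $\ell\in\mathbb{Z}$ and $\omega(j)\in\Ap(\Gamma_2,a_1)$, $0\le j<a_1$; by Lemma \ref{unique-form}, $a\in\Gamma$ if and only if $\ell\in\Gamma_1$. Since the $a_2\omega(j)$ run over distinct residue classes modulo $a_1$ (because $\gcd(a_1,a_2)=1$ and $\omega(j)\equiv j\bmod a_1$), every element of $\Gamma$, hence every element of $\Ap(\Gamma,z)$ or of $\D_\Gamma(z)$, lies in exactly one coset $a_1\mathbb{Z}+a_2\omega(j)$; so it suffices to determine, for each fixed $j$, which values of $\ell$ occur, and the displayed unions will automatically be disjoint.

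The key computation is the unique representation of $a-z$ where $z=a_1k+a_2\omega(i)$. Since $\omega(j)-\omega(i)\equiv j-i\bmod a_1$, the Apéry component of $a-z$ is $\omega(j-i)$ (indices read modulo $a_1$), and applying the definition of the cocycle to the pair $(j-i,i)$, whose sum is $j$, gives $\omega(j)-\omega(i)=\omega(j-i)-a_1\,\mathrm{h}_{\Gamma_2,a_1}(j-i,i)$. Substituting, $a-z=a_1\bigl(\ell-k-a_2\,\mathrm{h}_{\Gamma_2,a_1}(j-i,i)\bigr)+a_2\omega(j-i)$, so Lemma \ref{unique-form} yields $a-z\in\Gamma$ iff $\ell-k-a_2\,\mathrm{h}_{\Gamma_2,a_1}(j-i,i)\in\Gamma_1$. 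Combining with $a\in\Gamma\iff\ell\in\Gamma_1$, the condition ``$a\in\Gamma$ and $a-z\notin\Gamma$'' is exactly $\ell\in\Ap\bigl(\Gamma_1,\,k+a_2\,\mathrm{h}_{\Gamma_2,a_1}(j-i,i)\bigr)$, which is the first formula.

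For the divisor set I would run the same argument on $z-c$ with $c=a_1\ell+a_2\omega(j)$. Here the Apéry component is $\omega(i-j)$, and the cocycle applied to the pair $(i-j,j)$, whose sum is $i$, gives $\omega(i)-\omega(j)=\omega(i-j)-a_1\,\mathrm{h}_{\Gamma_2,a_1}(i-j,j)$, so that $z-c=a_1\bigl(k-\ell-a_2\,\mathrm{h}_{\Gamma_2,a_1}(i-j,j)\bigr)+a_2\omega(i-j)$. Thus $c\in\D_\Gamma(z)$ iff $\ell\in\Gamma_1$ and $\bigl(k-a_2\,\mathrm{h}_{\Gamma_2,a_1}(i-j,j)\bigr)-\ell\in\Gamma_1$, that is, iff $\ell\in\D_{\Gamma_1}\bigl(k-a_2\,\mathrm{h}_{\Gamma_2,a_1}(i-j,j)\bigr)$, which is the second formula.

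I expect the only delicate point to be the bookkeeping with the arguments of the cocycle: one must choose the pair of arguments of $\mathrm{h}_{\Gamma_2,a_1}$ so that the middle term in $\bigl(\omega(\cdot)-\omega(\cdot)+\omega(\cdot)\bigr)/a_1$ matches $\omega(j)$ in the Apéry case (resp. $\omega(i)$ in the divisor case) while the remaining Apéry component matches $\omega(j-i)$ (resp. $\omega(i-j)$), all residues taken modulo $a_1$. Once this is pinned down, the proof is a direct substitution together with two applications of Lemma \ref{unique-form}, and there are no further obstacles.
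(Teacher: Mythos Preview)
Your proposal is correct and follows essentially the same approach as the paper: write an arbitrary element in its unique form $a_1\ell+a_2\omega(j)$, compute $a-z$ (resp.\ $z-a$), use the cocycle identity to rewrite the $\omega$-part in Ap\'ery form, and then apply Lemma~\ref{unique-form} to test membership in $\Gamma$. Your write-up is in fact slightly more explicit than the paper's, in that you spell out the disjointness of the union via residue classes and the separate use of Lemma~\ref{unique-form} for the condition $a\in\Gamma$, but the underlying argument is the same.
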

\begin{proof}
Let $s=a_1p + a_2\omega(j)$ an integer. Then 
\begin{align*}
s-z&=a_1(p-k) + a_2(\omega(j)-\omega(i))\\
&=a_1(p-k)+a_2(\omega(j-i)-a_1\mathrm h_{\Gamma_2,a_1}(j-i,i))\\
&=a_1(p-k-a_2\mathrm h_{\Gamma_2,a_1}(j-i,i)) + a_2\omega(j-i).
\end{align*}
Therefore $s\in\Ap(\Gamma,z)$ if and only if $p\in\Ap(\Gamma_1,k+a_2\mathrm h_{\Gamma_2,a_1}(j-i,i))$.  Similarly
\begin{align*}
z-s&=a_1(k-p) + a_2(\omega(i)-\omega(j))\\
&=a_1(k-p)+a_2(\omega(i-j)-a_1\mathrm h_{\Gamma_2,a_1}(i-j,j))\\
&=a_1(k-a_2\mathrm h_{\Gamma_2,a_1}(i-j,j)-p) + a_2\omega(i-j),
\end{align*}
so $s\in\D_\Gamma(z)$ if and only if $p\in\D_{\Gamma_1}( k-a_2\mathrm h_{\Gamma_2,a_1}(i-j,j))$.
\end{proof}

\begin{prop}  
Let $\Gamma=a_1\Gamma_1 + a_2\Gamma_2$ be a gluing of numerical semigroups. Let $z = a_1\alpha + a_2\omega(i)$ be any integer, with $\omega(i)\in \Ap(\Gamma_2,a_1)$.
Let $\beta=\#\{x\in\Ap(\Gamma_2,a_1)|\ x-\omega(i)\notin\Gamma_2\}$.  Then
\[
\Ap(\Gamma,z)\geq (a_1-\beta)\cdot\sAp(\Gamma_1,\alpha)+\beta\cdot\sAp(\Gamma_1,\alpha+a_2).
\]  
\end{prop}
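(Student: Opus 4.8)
The plan is to feed $z = a_1\alpha + a_2\omega(i)$ into the first formula of Lemma~\ref{lem-cocycle} (taking $k=\alpha$) and then estimate the resulting union summand by summand. Writing $\Ap(\Gamma,z) = \bigcup_{j=0}^{a_1-1}\bigl(a_1\Ap(\Gamma_1,\alpha + a_2\mathrm h_{\Gamma_2,a_1}(j-i,i)) + a_2\omega(j)\bigr)$, I would first observe that this union is disjoint: the $j$-th piece lies in the residue class of $a_2\omega(j)$ modulo $a_1$, and since $\{\omega(0),\dots,\omega(a_1-1)\}$ is a complete residue system mod $a_1$ (because $a_1\in\Gamma_2$, so $\#\Ap(\Gamma_2,a_1)=a_1$) and $\gcd(a_1,a_2)=1$, these classes are pairwise distinct. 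As each piece is in obvious bijection with the corresponding Ap\'ery set of $\Gamma_1$ via $p\mapsto a_1p+a_2\omega(j)$, we obtain $\#\Ap(\Gamma,z) = \sum_{j=0}^{a_1-1}\#\Ap(\Gamma_1,\alpha + a_2\mathrm h_{\Gamma_2,a_1}(j-i,i))$, and everything reduces to understanding the cocycle values and the monotonicity of $\#\Ap(\Gamma_1,-)$.

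Next I would determine, for each $j$, whether $\mathrm h_{\Gamma_2,a_1}(j-i,i)$ is $0$ or at least $1$. By definition it equals $(\omega(j-i) - \omega(j) + \omega(i))/a_1$ with all terms in $\Ap(\Gamma_2,a_1)$; since $a_1\in\Gamma_2$, the element $\omega(j)$ is the least member of $\Gamma_2$ in its class mod $a_1$ while $\omega(j-i)+\omega(i)\in\Gamma_2$ lies in that same class, so $\mathrm h_{\Gamma_2,a_1}(j-i,i)\ge 0$, with equality exactly when $\omega(j)=\omega(j-i)+\omega(i)$. A one-line argument using this nonnegativity gives the converse: if $\omega(j)-\omega(i)\in\Gamma_2$ then it is an element of $\Gamma_2$ in the class of $j-i$, hence at least $\omega(j-i)$, forcing $\omega(j)-\omega(i)=\omega(j-i)$ and thus $\mathrm h_{\Gamma_2,a_1}(j-i,i)=0$. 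Consequently, as $\omega(j)$ runs over $\Ap(\Gamma_2,a_1)$, the cocycle vanishes for exactly $\#\{x\in\Ap(\Gamma_2,a_1)\mid x-\omega(i)\in\Gamma_2\} = a_1-\beta$ values of $j$ and is $\ge 1$ for the remaining $\beta$ values.

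The last ingredient is a monotonicity lemma: for any numerical semigroup $\Gamma$, any $x\in\mathbb Z$ and any $g\in\Gamma$ one has $\#\Ap(\Gamma,x+g)\ge\#\Ap(\Gamma,x)$. This follows by rewriting both sides via Lemma~\ref{apery-size} as $\#\Ap(\Gamma,y)=y+\#\Ap(\Gamma,-y)$ and applying the subadditivity of Lemma~\ref{subadditivity}(a) to the splitting $-x=(-x-g)+g$ together with $\#\Ap(\Gamma,g)=g$. Iterating with $g=a_2\in\Gamma_1$ gives $\#\Ap(\Gamma_1,\alpha+a_2t)\ge\#\Ap(\Gamma_1,\alpha+a_2)$ for all integers $t\ge 1$. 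Substituting into the sum above — each of the $a_1-\beta$ terms with vanishing cocycle equals $\#\Ap(\Gamma_1,\alpha)$, and each of the $\beta$ remaining terms is bounded below by $\#\Ap(\Gamma_1,\alpha+a_2)$ — yields the stated inequality.

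I expect the main obstacle to be the bookkeeping of the second step: making sure that the vanishing of $\mathrm h_{\Gamma_2,a_1}(j-i,i)$ is governed precisely by the condition $\omega(j)-\omega(i)\in\Gamma_2$ (and not by some shifted variant), so that the count comes out to be exactly $a_1-\beta$, while keeping the cocycle's argument order straight. The monotonicity lemma of the third step is short but deserves to be isolated as its own sublemma, since it is exactly the point where the subadditivity from Lemma~\ref{subadditivity} enters.
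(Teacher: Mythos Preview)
Your proposal is correct and follows essentially the same route as the paper: apply Lemma~\ref{lem-cocycle} to get $\#\Ap(\Gamma,z)=\sum_j\#\Ap(\Gamma_1,\alpha+a_2\mathrm h_{\Gamma_2,a_1}(j-i,i))$, identify the vanishing of the cocycle with the condition $\omega(j)-\omega(i)\in\Gamma_2$ so that exactly $\beta$ summands have $\mathrm h\ge 1$, and then invoke the monotonicity $\#\Ap(\Gamma_1,\alpha+a_2h)\ge\#\Ap(\Gamma_1,\alpha+a_2)$ for $h\ge 1$. The paper asserts the disjointness and the monotonicity without argument, whereas you supply clean justifications for both (the latter via Lemmas~\ref{apery-size} and~\ref{subadditivity}(a)); otherwise the two proofs are the same.
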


\begin{proof}
We have the following sequence of equalities:
\begin{eqnarray*}
\omega(j)-\omega(i)\in\Gamma_2 &\iff& \omega(j)-\omega(i)\in \Ap(\Gamma_2,a_1)\\
&\iff &\omega(j)-\omega(i)=\omega(j-i)\\
&\iff & \mathrm h_{\Gamma_2,a_1}(j-i,i)=(\omega(j-i)-\omega(j)+\omega(i))/a_1=0.
\end{eqnarray*}
Thus $\beta=\#\{j\in\mathbb{Z}_{a_1}|\ \mathrm h_{\Gamma_2,a_1}(j-i,i))\geq 1\}$.  Since $a_2\in\Gamma_1$ we have
 $\#\Ap(\Gamma_1,\alpha+a_2h)\geq \#\Ap(\Gamma_1,\alpha+a_2)$ when $h\geq 1$.  This explains the second step below.  
 The first step below comes directly from Lemma \ref{lem-cocycle}.
 \begin{align*}
\#\Ap(\Gamma,z)=&\sum_{j=0}^{a_1-1}\#\Ap(\Gamma_1,\alpha+a_2\mathrm h_{\Gamma_2,a_1}(j-i,i))\\
\geq&(a_1-\beta)\cdot\sAp(\Gamma_1,\alpha)+\beta\cdot\sAp(\Gamma_1,\alpha+a_2).\qedhere
 \end{align*}
\end{proof}

\begin{lem}\label{lem6}
Let $\Gamma$ be a numerical semigroup.  Choose $x\in\mathbb{Z}$ and $y\in\Gamma$. Then 
\[
\Ap(\Gamma,x+y)=\Ap(\Gamma,x)\cup(y+\Ap(\Gamma,x)) 
\cup(\Ap(\Gamma,y)\cap(x+\Ap(\Gamma,y)).
\]
\end{lem}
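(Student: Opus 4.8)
The plan is to prove both inclusions directly from the definition $\Ap(\Gamma,t)=\{a\in\Gamma\mid a-t\notin\Gamma\}$, in the spirit of the proof of Lemma~\ref{subadditivity}, but with an extra case distinction to accommodate the fact that $x$ may be negative (so that, unlike $y$, adding $x$ need not preserve membership in $\Gamma$).

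\emph{Left-to-right inclusion.} Let $z\in\Ap(\Gamma,x+y)$, so that $z\in\Gamma$ and $z-x-y\notin\Gamma$. If $z-y\in\Gamma$, then since $(z-y)-x=z-x-y\notin\Gamma$ we get $z-y\in\Ap(\Gamma,x)$, hence $z\in y+\Ap(\Gamma,x)$. If $z-y\notin\Gamma$, then $z\in\Ap(\Gamma,y)$, and we split once more: if $z-x\notin\Gamma$ then $z\in\Ap(\Gamma,x)$, while if $z-x\in\Gamma$ then $(z-x)-y\notin\Gamma$ shows $z-x\in\Ap(\Gamma,y)$, so $z\in x+\Ap(\Gamma,y)$ and therefore $z\in\Ap(\Gamma,y)\cap(x+\Ap(\Gamma,y))$. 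In every case $z$ lies in the right-hand side. Alternatively, one may obtain this inclusion by intersecting the containment $\Ap(\Gamma,x+y)\subseteq\Ap(\Gamma,x)\cup(x+\Ap(\Gamma,y))$ established in the proof of Lemma~\ref{subadditivity} with its symmetric counterpart $\Ap(\Gamma,x+y)\subseteq\Ap(\Gamma,y)\cup(y+\Ap(\Gamma,x))$ and distributing.

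\emph{Right-to-left inclusion.} It suffices to show each of the three sets is contained in $\Ap(\Gamma,x+y)$. For $z\in\Ap(\Gamma,x)$ we have $z\in\Gamma$ and $z-x\notin\Gamma$; if $z-x-y$ were in $\Gamma$ then, since $y\in\Gamma$, we would get $z-x=(z-x-y)+y\in\Gamma$, a contradiction, so $z\in\Ap(\Gamma,x+y)$. For $z=y+w$ with $w\in\Ap(\Gamma,x)$ we have $z\in\Gamma$ because $y,w\in\Gamma$, and $z-(x+y)=w-x\notin\Gamma$, so $z\in\Ap(\Gamma,x+y)$. Finally, if $z\in\Ap(\Gamma,y)\cap(x+\Ap(\Gamma,y))$, then $z\in\Gamma$ by the first membership, and $z-x\in\Ap(\Gamma,y)$ forces $z-(x+y)\notin\Gamma$ by the second, so again $z\in\Ap(\Gamma,x+y)$.

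There is no substantial obstacle here; the only point demanding care is the shape of the third summand. Since $x$ need not lie in $\Gamma$, the translate $x+\Ap(\Gamma,y)$ may contain integers outside $\Gamma$, so it cannot appear alone; intersecting it with $\Ap(\Gamma,y)$ both repairs this and is exactly what the left-to-right analysis produces. Conversely $\Ap(\Gamma,y)$ on its own is too large, since it may contain elements $z$ with $z-x-y\in\Gamma$. Hence neither half of the intersection can be dropped.
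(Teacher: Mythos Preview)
Your proof is correct and follows essentially the same approach as the paper's: both argue by direct element-chasing from the definition of $\Ap(\Gamma,t)$, checking each set inclusion. The only cosmetic difference is that in the left-to-right direction you branch first on whether $z-y\in\Gamma$ and then on whether $z-x\in\Gamma$, whereas the paper assumes $w\notin\Ap(\Gamma,x)\cup(y+\Ap(\Gamma,x))$ and deduces membership in the third set; the underlying logic is the same.
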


\begin{proof}
First we will show that each of the sets in the union on the right is contained on the left side. Suppose that $z\in\Ap(\Gamma,x)$.  Since $z-x\notin\Gamma$ we have $z-x-y\notin\Gamma$.  Thus $z\in\Ap(\Gamma,x+y)$. Suppose $z'\in(y+\Ap(\Gamma,x))$.  Then $z'-y\in\Ap(\Gamma,x)$; hence $z'-y\in\Gamma$ and $z'-y-x\notin\Gamma$.  Since $y\in\Gamma$ we have $z'=z'-y+y\in\Gamma$. Thus $z'\in\Ap(\Gamma,x+y)$. Lastly suppose that $z''\in \Ap(\Gamma,y)\cap(x+\Ap(\Gamma,y))$, Then $z''\in\Ap(\Gamma,y)$ implies that $z''\in\Gamma$. Also $z''-x\in\Ap(\Gamma,y)$ implies that $z''-x-y\notin\Gamma$.  Thus $z''\in\Ap(\Gamma,x+y)$. 

 Let $w\in\Ap(\Gamma,x+y)$. Assuming $w\notin\Ap(\Gamma,x)\cup(y+\Ap(\Gamma,x))$ it suffices to show that $w\in(\Ap(\Gamma,y)\cap(x+\Ap(\Gamma,y))$.  Since $w\in\Gamma$, and $w\notin\Ap(\Gamma,x)$ we have that $w-x\in\Gamma$. Thus $w-x\in\Ap(\Gamma,y)$ and $w\in x+\Ap(\Gamma,y)$. Since $w-y\notin \Ap(\Gamma,x)$ and $w-x-y\notin\Gamma$ we have $w-y\notin\Gamma$.  Thus $w\in\Ap(\Gamma,y)$ and the result follows.
\end{proof}

\begin{remark}\label{rmk7}
In Lemma \ref{lem6} we get two special cases:
\begin{enumerate}
\item \label{rmk7-1}If $-x\in\Gamma$, then
$\Ap(\Gamma,x+y)=\Ap(\Gamma,y)\cap(x+\Ap(\Gamma,y))$;
\item \label{rmk7-2} For a set of integers $X$, write $\Delta(X)=\{x'-x \mid x,x'\in X, x'>x \}$.\\
 If $|x|\notin\Delta(\Ap(\Gamma,y))\cup\{0\}$,  then 
$\Ap(\Gamma,x+y)=\{0,y\}+\Ap(\Gamma,x)$.
\end{enumerate}
\end{remark}

For a gluing $\Gamma=a_1\Gamma_1+a_2\Gamma_2$ the next result produces a simple formula for $\Ap(\Gamma,z)$ in the case where $z$ is divisible by $a_1$. 

\begin{prop}\label{apery-multiple}
Let $\Gamma=a_1\Gamma_1+a_2\Gamma_2$ be a gluing of numerical semigroups. Then given any $z\in\mathbb{Z}$ we have
\[
\Ap(\Gamma,a_1z)=a_1\Ap(\Gamma_1,z)+a_2\Ap(\Gamma_2,a_1).
\] 
In particular
\[
\sAp(\Gamma,a_1z)= a_1\cdot\sAp(\Gamma_1,z).
 \] 
\end{prop}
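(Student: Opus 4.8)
The plan is to read this off almost immediately from the unique representation in Lemma \ref{unique-form}, which is exactly adapted to statements about membership in a gluing. Fix $z\in\mathbb{Z}$ and let $s$ be an arbitrary integer. By Lemma \ref{unique-form} we may write $s=a_1p+a_2\omega$ with $p\in\mathbb{Z}$ and $\omega\in\Ap(\Gamma_2,a_1)$, this expression being unique, and moreover $s\in\Gamma$ if and only if $p\in\Gamma_1$. So the whole argument will be a translation of the two defining conditions of $\Ap(\Gamma,a_1z)$ into conditions on the pair $(p,\omega)$.

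First I would handle the membership condition: $s\in\Gamma$ is simply $p\in\Gamma_1$. Next, for the condition $s-a_1z\notin\Gamma$, I would observe that $s-a_1z=a_1(p-z)+a_2\omega$ is still written in the canonical form of Lemma \ref{unique-form}, since only the $\Gamma_1$‑coordinate has changed and $\omega$ still lies in $\Ap(\Gamma_2,a_1)$; hence $s-a_1z\in\Gamma$ if and only if $p-z\in\Gamma_1$. Combining the two, $s\in\Ap(\Gamma,a_1z)$ precisely when $p\in\Gamma_1$ and $p-z\notin\Gamma_1$, i.e. when $p\in\Ap(\Gamma_1,z)$, with $\omega\in\Ap(\Gamma_2,a_1)$ entirely unconstrained. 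This is exactly the asserted equality
\[
\Ap(\Gamma,a_1z)=a_1\Ap(\Gamma_1,z)+a_2\Ap(\Gamma_2,a_1).
\]
(Equivalently one can specialize Lemma \ref{lem-cocycle} to $i=0$, $k=z$, and use that $\mathrm h_{\Gamma_2,a_1}(j,0)=0$ for all $j$, which collapses the union there to the same set; I would probably mention this but carry out the argument via Lemma \ref{unique-form}.)

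For the cardinality statement I would again invoke the uniqueness clause of Lemma \ref{unique-form}: the map $(p,\omega)\mapsto a_1p+a_2\omega$ is injective on $\mathbb{Z}\times\Ap(\Gamma_2,a_1)$, so the right-hand side above is a disjoint sum and has $\#\Ap(\Gamma_1,z)\cdot\#\Ap(\Gamma_2,a_1)$ elements. Finally $\#\Ap(\Gamma_2,a_1)=a_1$ because $a_1\in\Gamma_2$ (Lemma \ref{apery-size}), yielding $\#\Ap(\Gamma,a_1z)=a_1\cdot\#\Ap(\Gamma_1,z)$. There is no real obstacle here — the content is entirely contained in Lemma \ref{unique-form}; the only points that need a moment's care are checking that subtracting $a_1z$ leaves the $\Gamma_2$‑component untouched (so it is genuinely free), and recalling the standard fact $\#\Ap(\Gamma_2,a_1)=a_1$ used in the count.
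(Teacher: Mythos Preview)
Your proof is correct and essentially matches the paper's. The paper proves this by specializing Lemma~\ref{lem-cocycle} to $i=0$ and using $\mathrm h_{\Gamma_2,a_1}(j,0)=0$ --- exactly the alternative you note parenthetically --- while your primary argument simply unwinds that specialization directly via Lemma~\ref{unique-form}; the content is the same.
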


%

\begin{proof}
Using the unique form in Lemma \ref{unique-form} we have $a_1z=a_1k+a_2\omega$ with $\omega=\omega(0)=0$ and $z=k$.  Notice that $\mathrm h_{\Gamma_2,a_1}(j-0,0)=0$ for all $j$; hence by Lemma \ref{lem-cocycle} we have
\[
\Ap(\Gamma,a_1z)=\bigcup_{j=0}^{a_1-1}a_1\Ap(\Gamma_1,z)+a_2\omega(j)=a_1\Ap(\Gamma_1,z)+a_2\Ap(\Gamma_2,a_1).\qedhere
\]
\end{proof}

Notice that by setting $z=a_2$ in Proposition \ref{apery-multiple} we can recover the previously known special case  $\Ap(\Gamma,a_1a_2)=a_1\Ap(\Gamma_1,a_2)+a_2\Ap(\Gamma_2,a_1)$ from \cite[Chapter 8]{ns}.

Combining Lemma \ref{subadditivity}~(c) with Propostion \ref{apery-multiple} we get an expression for $\Ap(\Gamma,z)$ when $z\in\Gamma$. 

\begin{cor} \label{cor10}
Let $\Gamma=a_1\Gamma_1+a_2\Gamma_2$ be a gluing of numerical semigroups. Let $g_1\in\Gamma_1$, $g_2\in\Gamma_2$.  and $g=a_1g_1+a_2g_2\in\Gamma$.   Then
\[
\Ap(\Gamma,g)=(a_1\Ap(\Gamma_1,g_1)+a_2\Ap(\Gamma_2,a_1))
\cup(a_1g_1+a_1\Ap(\Gamma_1,a_2)+a_2\Ap(\Gamma_2,g_2)).
\]
\end{cor}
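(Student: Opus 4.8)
The plan is to write $g = a_1g_1 + a_2g_2$ as a sum of two elements of $\Gamma$ and to peel off the Ap\'ery set of each summand using Proposition \ref{apery-multiple}. First note that $a_1g_1 \in a_1\Gamma_1 \subseteq \Gamma$ and $a_2g_2 \in a_2\Gamma_2 \subseteq \Gamma$, so both are genuine elements of $\Gamma$. Applying Lemma \ref{subadditivity}~(b) to these two elements gives the disjoint decomposition
\[
\Ap(\Gamma,g) = \Ap(\Gamma, a_1g_1) \cup \bigl(a_1g_1 + \Ap(\Gamma, a_2g_2)\bigr).
\]

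Next I would treat the two pieces separately. Proposition \ref{apery-multiple}, applied with $z = g_1$, yields $\Ap(\Gamma, a_1g_1) = a_1\Ap(\Gamma_1, g_1) + a_2\Ap(\Gamma_2, a_1)$, which is exactly the first set in the claimed union. For the second piece I would exploit the symmetry of the gluing construction: the defining conditions ($a_1 \in \Gamma_2$, $a_2 \in \Gamma_1$, $\gcd(a_1,a_2) = 1$, and neither $a_1$ nor $a_2$ a minimal generator) are symmetric in $\Gamma_1$ and $\Gamma_2$, so the equality $\Gamma = a_2\Gamma_2 + a_1\Gamma_1$ also exhibits $\Gamma$ as a gluing, with the roles of the two factors interchanged. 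Applying Proposition \ref{apery-multiple} in this form with $z = g_2$ gives $\Ap(\Gamma, a_2g_2) = a_2\Ap(\Gamma_2, g_2) + a_1\Ap(\Gamma_1, a_2)$. Substituting both expressions into the decomposition above produces the stated formula.

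I do not anticipate a serious obstacle here; the argument is essentially bookkeeping. The one point that needs a line of justification is the symmetry of the gluing used to rewrite $\Ap(\Gamma, a_2g_2)$, after which Proposition \ref{apery-multiple} applies verbatim with the two semigroups swapped; the only subtlety is to state clearly that the hypotheses defining a gluing are invariant under exchanging $\Gamma_1$ and $\Gamma_2$. Once this is in place the displayed identity follows immediately by substitution, and (if one wishes to record it, although it is not part of the statement) the union inherits disjointness from Lemma \ref{subadditivity}~(b).
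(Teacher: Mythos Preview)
Your proof is correct and follows the same approach as the paper, which simply states that the corollary follows by combining Lemma~\ref{subadditivity} (the paper writes ``(c)'' but means part (b)) with Proposition~\ref{apery-multiple}. Your explicit invocation of the symmetry of the gluing to handle the $\Ap(\Gamma,a_2g_2)$ term is the right way to spell out the second application of Proposition~\ref{apery-multiple}.
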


Since $\sAp(\Gamma,g)=g$ for any $g\in\Gamma$, it follows that the union in Corollary \ref{cor10} is disjoint.

\begin{thm}\label{lem11}
Let $\Gamma=a_1\Gamma_1+a_2\mathbb{N}$ be a gluing of numerical semigroups, and let $z$ be an integer. Express $z$ as $z=a_1\alpha+a_2\beta$ with $0\leq \beta<a_1$ and $\alpha\in\mathbb{Z}$.   We have the following: 
\begin{enumerate}
\item $\Ap(\Gamma,z)=(a_1\Ap(\Gamma_1,\alpha+a_2)+a_2\{0,\hdots,\beta-1\})\cup(a_1\Ap(\Gamma_1,\alpha)+a_2\{\beta,\hdots, 
a_1-1\});$\\

\item $\D_\Gamma(z)=(a_1\D_{\Gamma_1}(\alpha)+a_2\{0,\hdots,\beta\})\cup (a_1\D_{\Gamma_1}(\alpha+a_2)+a_2\{\beta+1,\hdots,
a_1-1\});$\\

\item $\sAp(\Gamma,z)=\beta\cdot\sAp(\Gamma_1,\alpha+a_2)+(a_1-\beta)\cdot\sAp(\Gamma_1,\alpha);$\\

\item $\sD_\Gamma(z)=(\beta+1)\sD_{\Gamma_1}(\alpha)+(a_1-\beta-1)\sD_{\Gamma_1}(\alpha-a_2)$. 
\end{enumerate}
\end{thm}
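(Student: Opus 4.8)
The plan is to specialize the general gluing machinery of Lemma~\ref{lem-cocycle} (and its proof) to the case $\Gamma_2=\mathbb{N}$, where the Ap\'ery set $\Ap(\mathbb{N},a_1)=\{0,1,\hdots,a_1-1\}$ and the cocycle $\mathrm h_{\mathbb{N},a_1}$ becomes completely explicit. Concretely, $\omega(j)=j$ for $0\le j<a_1$, and the cocycle $\mathrm h_{\mathbb{N},a_1}(i,j)$ equals $(i+j-\overline{i+j})/a_1$, which is $1$ exactly when $i+j\ge a_1$ (i.e.\ when there is a ``carry'' modulo $a_1$) and $0$ otherwise. Writing $z=a_1\alpha+a_2\beta$ with $0\le\beta<a_1$ means we are taking $i=\beta$ in the notation of Lemma~\ref{lem-cocycle} (with $k=\alpha$). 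The key observation driving everything is then: $\mathrm h_{\mathbb{N},a_1}(j-\beta,\beta)=1$ iff $(j-\beta\bmod a_1)+\beta\ge a_1$, which for $0\le j<a_1$ happens precisely when $0\le j<\beta$; and $\mathrm h_{\mathbb{N},a_1}(\beta-j,j)=1$ iff $j>\beta$ (for $0\le j<a_1$). Substituting these two dichotomies into the two formulas of Lemma~\ref{lem-cocycle} immediately yields parts (1) and (2): the $j$'s split into the block $\{0,\hdots,\beta-1\}$ (resp.\ $\{\beta+1,\hdots,a_1-1\}$) where the cocycle is $1$ and the complementary block where it is $0$, and $a_2\omega(j)=a_2 j$ ranges over the stated residue blocks.

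For part (3), I would take cardinalities in (1). Since $\sAp(\Gamma,g)=g$ for $g\in\Gamma$ forces the union in Corollary~\ref{cor10} to be disjoint, and more relevantly because the two pieces in (1) lie in disjoint residue classes modulo $a_1$ (the first piece has residues in $\{0,\hdots,\beta-1\}\cdot a_2$ and the second in $\{\beta,\hdots,a_1-1\}\cdot a_2$, all distinct mod $a_1$ as $\gcd(a_1,a_2)=1$), the union is disjoint and the translates $a_1\Ap(\Gamma_1,\cdot)+a_2 j$ are each in bijection with $\Ap(\Gamma_1,\cdot)$. Hence $\sAp(\Gamma,z)=\beta\cdot\sAp(\Gamma_1,\alpha+a_2)+(a_1-\beta)\cdot\sAp(\Gamma_1,\alpha)$, which is (3). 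Part (4) is the analogous count from (2): the first piece contributes $\beta+1$ translates of $\D_{\Gamma_1}(\alpha)$ and the second contributes $a_1-\beta-1$ translates of $\D_{\Gamma_1}(\alpha+a_2)$, again over disjoint residues mod $a_1$; so $\sD_\Gamma(z)=(\beta+1)\sD_{\Gamma_1}(\alpha)+(a_1-\beta-1)\sD_{\Gamma_1}(\alpha+a_2)$. I note the statement in (4) as printed has $\sD_{\Gamma_1}(\alpha-a_2)$, so part of the task is to check the indexing carefully against Lemma~\ref{lem-cocycle}: there the divisor argument is $k-a_2\mathrm h_{\Gamma_2,a_1}(i-j,j)$, and whether this evaluates to $\alpha$ or $\alpha-a_2$ on the carry block depends on which of $i,j$ is larger and how the carry is oriented — this bookkeeping is the one genuinely error-prone spot.

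So the main obstacle is purely the carry-arithmetic bookkeeping: correctly identifying, for each residue $j\in\{0,\hdots,a_1-1\}$, whether the relevant cocycle value $\mathrm h_{\mathbb{N},a_1}(\pm(j-\beta),\cdot)$ is $0$ or $1$, and then matching the two resulting index blocks and their sizes ($\beta$ vs.\ $a_1-\beta$ for Ap\'ery sets, $\beta+1$ vs.\ $a_1-\beta-1$ for divisor sets — the off-by-one coming from the fact that $j=\beta$ itself gives cocycle $0$ in one formula and is the boundary case in the other) to the asymmetric ranges $\{0,\hdots,\beta-1\}$, $\{\beta,\hdots,a_1-1\}$, $\{0,\hdots,\beta\}$, $\{\beta+1,\hdots,a_1-1\}$ appearing in the statement. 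Everything else — disjointness of the unions, passing from sets to cardinalities, the bijections given by translation by $a_1\cdot(\text{elt of }\Gamma_1)+a_2 j$ — is routine and follows from $\gcd(a_1,a_2)=1$ together with Lemma~\ref{apery-size} applied to $\Gamma_1$. I would present the proof of (1) and (2) first by direct substitution into Lemma~\ref{lem-cocycle}, then derive (3) and (4) as a one-line cardinality count, being explicit about why the unions are disjoint.
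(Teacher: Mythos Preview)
Your approach is essentially identical to the paper's: specialize Lemma~\ref{lem-cocycle} to $\Gamma_2=\mathbb{N}$, observe $\omega(j)=j$, compute $\mathrm h_{\mathbb{N},a_1}(j-\beta,\beta)=1$ iff $j<\beta$ and $\mathrm h_{\mathbb{N},a_1}(\beta-j,j)=1$ iff $j>\beta$, and substitute; (3) and (4) then follow by counting over disjoint residue classes mod $a_1$. The discrepancy you flagged is real but is simply a typo in the statement of (2): since the divisor argument in Lemma~\ref{lem-cocycle} is $k-a_2\mathrm h_{\Gamma_2,a_1}(i-j,j)$, the carry block $j>\beta$ gives $\alpha-a_2$, not $\alpha+a_2$, and the paper's own proof and part (4) both have $\alpha-a_2$.
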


%
%
\begin{proof}
Notice that $\Ap(\mathbb{N},a_1)=\{0,1,\hdots,a_1-1\}$ and $\omega(j)=j$ for $0\leq j<a_1$.  When $j<\beta$ we have $\mathrm h_{\mathbb{N},a_1}(j-\beta,\beta)=1$, and when $j\geq \beta$ we have 
$\mathrm h_{\mathbb{N},a_1}(j-\beta,\beta)=0$. By Lemma \ref{lem-cocycle} we have
\begin{align*}
\Ap(\Gamma,z)&=\bigcup\nolimits_{j=0}^{a_1-1}a_1\Ap(\Gamma_1,\alpha+a_2\mathrm h_{\Gamma_2,a_1}(j-\beta,\beta)))+a_2j\\
&= (a_1\Ap(\Gamma_1,\alpha+a_2)+a_2\{0,\hdots,\beta-1\})\cup (a_1\Ap(\Gamma_1,\alpha)+a_2\{\beta,\hdots, 
a_1-1\}).
\end{align*}
When $j\leq \beta$ we have $\mathrm h_{\mathbb{N},a_1}(\beta-j,j)=0$, and when $j>\beta$ we have 
$\mathrm h_{\mathbb{N},a_1}(\beta-j,j)=1$. By Lemma \ref{lem-cocycle} we have
\begin{align*}
\D_\Gamma(z)&=\bigcup\nolimits_{j=0}^{a_1-1}a_1\D_{\Gamma_1}(\alpha-a_2\mathrm h_{\Gamma_2,a_1}(\beta-j,j)))+a_2j\\
&=(a_1\D_{\Gamma_1}(\alpha)+a_2\{0,\hdots,\beta\})\cup (a_1\D_{\Gamma_1}(\alpha-a_2)+a_2\{\beta+1,\hdots, 
a_1-1\}).
\end{align*}

Hence  $\sD_{\Gamma}(z)=(\beta+1)\sD_{\Gamma_1}(\alpha)+(a_1-\beta-1)\sD_{\Gamma_1}(\alpha-a_2)$. 
\end{proof}

Notices that (4) in Theorem \ref{lem11} can be sharpened a bit more. If $\alpha\in \Ap(\Gamma_1,a_2)$, then $\D_{\Gamma_1}(\alpha-a_2)$ is empty. If to the contrary, $\alpha-a_2\in \Gamma$, then $\D_{\Gamma_1}(\alpha)=\D_{\Gamma_1}(\alpha-a_2)\cup\D_{\Gamma_1}(\alpha)\setminus \D_{\Gamma_1}(\alpha-a_2)$. And the cardinality of $\D_{\Gamma_1}(\alpha-a_2+a_2)\setminus \D_{\Gamma_1}(\alpha-a_2)$ is precisely that of $\Ap(\Gamma,a_2)$ (\cite[Prposition 11]{D}). Hence $\sD_\Gamma(z)=(\beta+1)\sD_{\Gamma_1}(\alpha)+(a_1-\beta-1)\sD_{\Gamma_1}(\alpha-a_2)=(\beta+1)(\sD_{\Gamma_1}(\alpha-a_2)+a_2)+(a_1-\beta-1)\sD_{\Gamma_1}(\alpha-a_2)=a_1\sD_{\Gamma_1}(\alpha-a_2)+(\beta+1)a_2$. By putting all this together, we obtain.
\begin{equation}
\sD(z)=\begin{cases}
(\beta+1)\sD_{\Gamma_1}(\alpha), \hbox{ if }\alpha\in \Ap(\Gamma,a_2),\\
a_1\sD_{\Gamma_1}(\alpha-a_2)+(\beta+1)a_2, \hbox{ otherwise}.
\end{cases}
\end{equation}

\begin{remark}\label{remark12}
Suppose $\Gamma=\langle a,b\rangle$ and $z=ua+vb$ with $0\leq u< b$. Then we may apply Theorem \ref{lem11} with
$\Gamma_1=\mathbb{N}$, $a_1=b$, $a_2=a$, $\beta=u$ and $\alpha=v$.
Thus 
\[
\sAp(\Gamma,z)=(b-u)\max\{v,0\}+u\max\{v+a,0\}.
\] 
This yields the same result as \cite[Theorem 14]{D}.
\end{remark}

\section{The second Feng-Rao number for free numerical semigroups}


Recall that the second Feng-Rao number of a numerical semigroup $\Gamma$ can be computed as 
\begin{align*}
\mathrm E(\Gamma,2)=&\min\{\#\mathrm{Ap}(\Gamma, z)\mid  z\in \mathbb N\setminus\{0\}\}\\
=&\min\{ \#\Ap(\Gamma,z)\mid z\in \{1,\ldots,\mathrm m(\Gamma)\}\}.
\end{align*}
To realize the second equality above notice that for $x\ge \mathrm m(\Gamma)$ we have $\#\Ap(\Gamma,x)\ge x\ge \mathrm m(\Gamma)$.
\begin{remark}
Let $\Gamma$ be a numerical semigroup.  Then $\mathrm E(\Gamma,2)=\mathrm m(\Gamma)$ if and only if there exists $y\geq\mathrm m(\Gamma)$ such that $\sAp(\Gamma,z)\geq \mathrm m(\Gamma)\lceil \frac{z}{y}\rceil$ for all $z\in\mathbb{Z}$.  If this is the case, then $y$ can be chosen so that it is at most $2\mathrm m(\Gamma)-1$. 

Clearly if $\sAp(\Gamma,z)\geq \mathrm m(\Gamma)\lceil \frac{z}{y}\rceil$ with $y> 0$, then $\sAp(\Gamma,z)\geq \mathrm m(\Gamma)$ for $z>0$; hence $\mathrm E(\Gamma,2)=\mathrm m(\Gamma)$.  Let $z=n\mathrm m(\Gamma)+r$ with $0\leq r<\mathrm m(\Gamma)$.  Notice that for $n\geq 1$ we always have $n\geq \lceil\frac{n\mathrm m(\Gamma)+r}{2\mathrm m(\Gamma)-1}\rceil$; hence $\sAp(\Gamma,z)\geq n\mathrm m(\Gamma)+r\geq\mathrm m(\Gamma)\lceil \frac{n\mathrm m(\Gamma)+r}{2\mathrm m(\Gamma)-1}\rceil$.  Therefore supposing $\mathrm E(\Gamma,2)=\mathrm m(\Gamma)$ we have $\sAp(\Gamma,z)\geq \mathrm m(\Gamma)\lceil \frac{z}{2\mathrm m(\Gamma)-1}\rceil$ for all $z\in\mathbb{Z}$.  Also notice that if the statement is true for a certain $y$ value then it is true for all $y'>y$.  If $0<y<\mathrm m(\Gamma)$, then  $\sAp(\Gamma,\mathrm m(\Gamma))=\mathrm m(\Gamma)<\mathrm m(\Gamma)\lceil \frac{\mathrm m(\Gamma)}{y}\rceil$  would not work; hence we must have $y\geq\mathrm m(\Gamma)$.
\end{remark}

\begin{thm}\label{thm16}
Let $\Gamma=a_1\Gamma_1+a_2\Gamma_2$ be a gluing of numerical semigroups. Suppose there exists $y\geq \mathrm m(\Gamma)$ such that $\sAp(\Gamma_1,z)\geq \mathrm m(\Gamma_1)\lceil\frac{z}{y}\rceil$ for all $z\in\mathbb{Z}$.  If $a_2>a_1y$, then 
$\sAp(\Gamma,z)\geq \mathrm m(\Gamma)\lceil \frac{z}{a_2}\rceil$ for all $z\in\mathbb{Z}$
\end{thm}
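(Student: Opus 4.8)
My plan is to push the gluing structure through the cocycle formula of Lemma~\ref{lem-cocycle}, estimate the resulting Ap\'ery numbers over $\Gamma_1$ by the hypothesis, and reduce everything to an elementary inequality for ceilings. First I would dispose of the case $z\le 0$, where $\lceil z/a_2\rceil\le 0\le\sAp(\Gamma,z)$; so from now on $z>0$. I would also record the trivial bound $\m(\Gamma)\le a_1\m(\Gamma_1)$ (the multiplicity of a gluing $a_1\Gamma_1+a_2\Gamma_2$ is $\min\{a_1\m(\Gamma_1),a_2\m(\Gamma_2)\}$), which is all I need about the constants on the left.

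Next I would write $z=a_1\alpha+a_2\omega(i)$ with $\omega(i)\in\Ap(\Gamma_2,a_1)$ as in Lemma~\ref{unique-form}, put $h_j=\mathrm h_{\Gamma_2,a_1}(j-i,i)\ge 0$ for $0\le j<a_1$, and recall from Remark~\ref{lemOmega} that $\sum_{j=0}^{a_1-1}h_j=\omega(i)$. Lemma~\ref{lem-cocycle} gives $\sAp(\Gamma,z)=\sum_{j=0}^{a_1-1}\sAp(\Gamma_1,\alpha+a_2h_j)$. On the summands with $\alpha+a_2h_j>0$ I would apply the hypothesis $\sAp(\Gamma_1,w)\ge\m(\Gamma_1)\lceil w/y\rceil$, and on the others simply use $\sAp(\Gamma_1,w)\ge 0$. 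Since $\m(\Gamma)\le a_1\m(\Gamma_1)$, this reduces the theorem to the purely arithmetic inequality
\[
\sum_{j\,:\,\alpha+a_2h_j>0}\Bigl\lceil\tfrac{\alpha+a_2h_j}{y}\Bigr\rceil\ \ge\ a_1\Bigl\lceil\tfrac{z}{a_2}\Bigr\rceil ,
\]
which I will call $(\star)$.

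To prove $(\star)$ I would write $a_2=a_1y+t$ with $t\ge 1$ (this is where the hypothesis $a_2>a_1y$ is used), so that $\lceil(\alpha+a_2h_j)/y\rceil=a_1h_j+\lceil(\alpha+th_j)/y\rceil$; together with $\lceil z/a_2\rceil=\omega(i)+\lceil a_1\alpha/a_2\rceil$ and $\sum_j h_j=\omega(i)$, inequality $(\star)$ becomes
\[
\sum_{j\in G}\Bigl\lceil\tfrac{\alpha+th_j}{y}\Bigr\rceil\ \ge\ a_1\sum_{j\in B}h_j+a_1\Bigl\lceil\tfrac{a_1\alpha}{a_2}\Bigr\rceil ,
\]
where $G=\{j:\alpha+a_2h_j>0\}$ and $B$ is its complement. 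If $\alpha\ge 0$, then $\sum_{j\in B}h_j=0$ and the inequality is term by term, using $(\alpha+th_j)/y\ge\alpha/y\ge a_1\alpha/a_2$. If $\alpha<0$, I would use the identity
\[
\frac{\alpha+th_j}{y}-\frac{a_1\alpha}{a_2}=\frac{t\,(\alpha+a_2h_j)}{a_2\,y},
\]
which gives $\lceil(\alpha+th_j)/y\rceil\ge\lceil a_1\alpha/a_2\rceil$ for every $j\in G$, so the left-hand side is $\ge|G|\lceil a_1\alpha/a_2\rceil$; since $\lceil a_1\alpha/a_2\rceil\le 0$, it then remains only to check $|B|\lfloor a_1|\alpha|/a_2\rfloor\ge a_1\sum_{j\in B}h_j$, and this follows because $j\in B$ forces $h_j\le\lfloor|\alpha|/a_2\rfloor$ while $\lfloor a_1|\alpha|/a_2\rfloor\ge a_1\lfloor|\alpha|/a_2\rfloor$.

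The hard part is the case $\alpha<0$: one cannot simply apply the $\Gamma_1$-hypothesis to all summands of the cocycle formula, since the summands with $\alpha+a_2h_j<0$ would be counted as negative contributions; they must be discarded using nonnegativity of $\sAp(\Gamma_1,\cdot)$, and then one has to show that the surviving ``positive'' summands alone still dominate $a_1\lceil z/a_2\rceil$. Making this dominance precise — converting the cocycle identity $\sum_j h_j=\omega(i)$ into the floor/ceiling bookkeeping above and controlling which indices lie in $B$ — is the delicate step; the cases $z\le 0$ and $\alpha\ge 0$ are immediate by comparison.
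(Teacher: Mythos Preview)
Your argument is correct and follows essentially the same route as the paper: both proofs apply Lemma~\ref{lem-cocycle}, use the cocycle identity $\sum_j h_j=\omega(i)$, split according to the sign of $\alpha$ (the paper's $k$), discard the nonpositive summands via $\sAp(\Gamma_1,\cdot)\ge 0$, and exploit $a_2=a_1y+t$ with $t>0$ to compare ceilings. Your explicit identity $\frac{\alpha+th_j}{y}-\frac{a_1\alpha}{a_2}=\frac{t(\alpha+a_2h_j)}{a_2y}$ is a clean repackaging of the paper's eighth step in the $k<0$ chain, and your use of $\m(\Gamma)\le a_1\m(\Gamma_1)$ (which in fact is an equality under the hypotheses) slightly streamlines the constants, but the substance is the same.
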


\begin{proof}
If $z$ is less than or equal to zero the result is clear.  Let $z$ be a positive integer and write $z=a_1k+a_2\omega(i)$, where $\omega(j)$ denotes the element of $\Ap(\Gamma_2,a_1)$ which is congruent to $j$ modulo $a_1$.  For ease of notation set $h_j=\mathrm h_{\Gamma_2,a_1}(j-i,i)$.  As in Lemma \ref{lemOmega} note that $\sum_{j=0}^{a_1-1}h_j=\omega(i)$.
First suppose $k\geq 0$.  Lemma \ref{lem-cocycle} gives us the first step below.
\begin{eqnarray*}
\sAp(\Gamma,z)&=&\textstyle\sum_{j=0}^{a_1-1}\sAp(\Gamma_1,k+a_2h_j)\\
&\geq& \textstyle\sum_{j=0}^{a_1-1}\mathrm m(\Gamma_1)\lceil \frac{k+a_2h_j}{y}\rceil \\
&\geq&\textstyle\sum_{j=0}^{a_1-1}\mathrm m(\Gamma_1)\lceil \frac{k+a_1yh_j}{y}\rceil )\\
&=&\textstyle\sum_{j=0}^{a_1-1}\mathrm m(\Gamma_1)(\lceil \frac{k}{y}\rceil + a_1h_j) \\
&=&\textstyle a_1\mathrm m(\Gamma_1)\lceil \frac{k}{y}\rceil + a_1\mathrm m(\Gamma_1)\omega(i)\\
&=&\textstyle a_1\mathrm m(\Gamma_1)\lceil \frac{k}{y}+\omega(i)\rceil\\
&=&\textstyle \mathrm m(\Gamma)\lceil \frac{a_1k}{a_1y}+\frac{a_2\omega(i)}{a_2}\rceil\\
&\geq&\textstyle \mathrm m(\Gamma)\lceil \frac{a_1k}{a_2}+\frac{a_2\omega(i)}{a_2}\rceil\\
&\geq&\textstyle \mathrm m(\Gamma)\lceil \frac{z}{a_2}\rceil.
\end{eqnarray*}
Next supoose $k<0$.  Set $c=a_2-a_1y$.   From the equation $z=a_1k+a_2\omega(i)$ we get 
$\lceil\frac{z}{a_2}\rceil=\omega(i)-\lfloor\frac{-a_1k}{a_2}\rfloor$, which gives us the last step below.  Since $0<c<a_2$ when $h_j\geq\frac{-k}{a_2}$ we have $\frac{-k}{a_2}\geq\frac{-k-h_jc}{a_2-c}$, which explains the eighth step below.  
\begin{align*}
\sAp(\Gamma,z)&=\textstyle\sum_{j=0}^{a_1-1}\sAp(\Gamma_1,k+a_2h_j)\\
&\geq\textstyle\sum_{h_j\geq\frac{-k}{a_2}}\sAp(\Gamma_1,k+a_2h_j)\\
&\geq\textstyle\mathrm m(\Gamma_1)\sum_{h_j\geq\frac{-k}{a_2}}\left\lceil \frac{k+a_2h_j}{y}\right\rceil \\
&=\textstyle\mathrm m(\Gamma_1)\sum_{h_j\geq\frac{-k}{a_2}}\left\lceil \frac{k+h_j(c+a_1y)}{y}\right\rceil \\
&=\textstyle\mathrm m(\Gamma_1)\sum_{h_j\geq\frac{-k}{a_2}}\left(a_1h_j+\left\lceil \frac{k+h_jc}{y}\right\rceil\right) \\
&=\textstyle\mathrm m(\Gamma_1)\sum_{h_j\geq\frac{-k}{a_2}}\left(a_1h_j-\left\lfloor \frac{-k-h_jc}{y}\right\rfloor\right) \\
&=\textstyle\mathrm m(\Gamma_1)\sum_{h_j\geq\frac{-k}{a_2}}\left(a_1h_j-\left\lfloor a_1\frac{-k-h_jc}{a_2-c}\right\rfloor\right) \\
&\geq \textstyle\mathrm m(\Gamma_1)\sum_{h_j\geq\frac{-k}{a_2}}\left(a_1h_j-\left\lfloor a_1\frac{-k}{a_2}\right\rfloor\right) \\
&= \textstyle\mathrm m(\Gamma_1)\left(\sum_{j=0}^{a_1-1}a_1h_j-\sum_{h_j<\frac{-k}{a_2}}a_1h_j-\sum_{h_j\geq\frac{-k}{a_2}}\left\lfloor\frac{-a_1k}{a_2}\right\rfloor\right) \\
&\geq \textstyle\mathrm m(\Gamma_1)\left(a_1\omega(i)-\sum_{h_j<\frac{-k}{a_2}}\left\lfloor\frac{-a_1k}{a_2}\right\rfloor-\sum_{h_j\geq\frac{-k}{a_2}}\left\lfloor\frac{-a_1k}{a_2}\right\rfloor\right) \\
&= \textstyle\mathrm m(\Gamma_1)\left(a_1\omega(i)-a_1\left\lfloor\frac{-a_1k}{a_2}\right\rfloor\right) \\
&= \textstyle \mathrm m(\Gamma)\lceil \frac{z}{a_2}\rceil. \qedhere
\end{align*}
\end{proof}

We say that $\Gamma$ is \emph{free} if either $\Gamma$ is $\mathbb N$ or it is the a gluing of a free numerical semigroup with $\mathbb N$ (these semigroups were introduced by \cite{B-C}).  Analogously, a numerical semigroup $\Gamma$ is \emph{telescopic} if either $\Gamma$ is $\mathbb N$ or $\Gamma=a_1\Gamma_1+a_2\mathbb N$ is a gluing with $\Gamma_1$ telescopic and $a_2>a_1 n$ for each minimal generator $n$ of $\Gamma_1$. 

\begin{cor}\label{cor:E-tel}
Let $\Gamma=\langle n_1,n_2,\dots,n_m\rangle$ be telescopic with $n_1<\cdots<n_m$.  Given $z\in\mathbb{N}$ we have $\sAp(\Gamma,z)\geq n_1\lceil\frac{z}{n_m}\rceil$; hence $\mathrm E(\Gamma,2)=n_1$.
\end{cor}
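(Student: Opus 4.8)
The plan is to prove the statement by induction on $m$, the number of minimal generators of $\Gamma$, using Theorem \ref{thm16} as the inductive engine. The base case $m=1$ is $\Gamma=\mathbb{N}$, where $\Ap(\mathbb{N},z)=\{0,1,\dots,z-1\}$ for $z\geq 0$, so $\sAp(\mathbb{N},z)=z=1\cdot\lceil z/1\rceil$ and $\mathrm m(\mathbb{N})=1=n_m$, making the inequality an equality; this matches $y=1=n_m$.

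For the inductive step, write $\Gamma=a_1\Gamma_1+a_2\mathbb{N}$ as in the definition of telescopic, where $\Gamma_1=\langle n_1/d,\dots,n_{m-1}/d\rangle$ for the appropriate common factor (more precisely, $\Gamma_1$ is telescopic with minimal generators $n_1'<\cdots<n_{m-1}'$, $a_1$ is the chosen coprime factor, and $a_2=n_m$), and $a_2>a_1 n$ for every minimal generator $n$ of $\Gamma_1$. The key point is that the multiplicity and the top generator behave correctly under this gluing: $\mathrm m(\Gamma)=a_1\mathrm m(\Gamma_1)=a_1 n_1'=n_1$ since $a_1\Gamma_1+a_2\mathbb{N}$ has smallest nonzero element $a_1 n_1'$ (here one uses $a_2>a_1 n_1'=\mathrm m(\Gamma)$, so $a_2$ does not interfere with the minimum), and $a_2=n_m$ is the largest generator of $\Gamma$. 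By the inductive hypothesis applied to $\Gamma_1$, we have $\sAp(\Gamma_1,z)\geq \mathrm m(\Gamma_1)\lceil z/n_{m-1}'\rceil$ for all $z\in\mathbb{Z}$, i.e.\ the hypothesis of Theorem \ref{thm16} holds with $y=n_{m-1}'$. I must check $y\geq\mathrm m(\Gamma)$: since $a_2=n_m>a_1 n_{m-1}'\geq n_{m-1}'$ is the requirement for telescopic, and $\mathrm m(\Gamma)=a_1 n_1'\leq a_1 n_{m-1}'<a_2$; but I actually need $y=n_{m-1}'\geq\mathrm m(\Gamma)=a_1 n_1'$. This holds because $a_1$ is itself an element of $\Gamma_1$ that is not a minimal generator, hence $a_1\geq n_1'$ forces... wait—more carefully, $a_1\in\Gamma_1$ and $a_1$ is not a minimal generator, and $n_{m-1}'$ is the largest minimal generator; one shows $\mathrm m(\Gamma)=a_1\mathrm m(\Gamma_1)\leq y=n_{m-1}'$ by noting $a_1$ is a nonnegative-integer combination so $a_1\geq \mathrm m(\Gamma_1)=n_1'$ is not enough on its own, but in fact the requirement $a_2>a_1 n$ for all minimal generators $n$ of $\Gamma_1$ combined with $a_2=n_m$ being a generator of $\Gamma$ (so $a_2$ is not a multiple of $a_1$) and $a_1\leq a_2/n_{m-1}'$... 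I would instead observe directly that $y\ge\mathrm m(\Gamma)$ follows since by induction $y=n_{m-1}'\ge\mathrm m(\Gamma_1)$ and the telescopic condition forces $a_2>a_1 y$; but I only need $y\geq \mathrm{m}(\Gamma)=a_1\mathrm m(\Gamma_1)$. I expect this to reduce to: $a_1\geq\mathrm m(\Gamma_1)$ is false in general, so the correct bound is obtained differently—one should pick $y$ in Theorem \ref{thm16} to be $\max\{n_{m-1}',\mathrm m(\Gamma)\}$, which still satisfies $\sAp(\Gamma_1,z)\geq\mathrm m(\Gamma_1)\lceil z/y\rceil$ (since enlarging $y$ only weakens the right side) and still satisfies $a_2>a_1 y$ because $a_2>a_1\mathrm m(\Gamma)=a_1^2\mathrm m(\Gamma_1)$ would be needed—hmm, this is exactly the delicate point.

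\textbf{The main obstacle.} The crux is verifying that the hypotheses of Theorem \ref{thm16} are met, specifically the conjunction "$y\geq\mathrm m(\Gamma)$ and $a_2>a_1 y$", which together force $a_2>a_1\mathrm m(\Gamma)=a_1^2\mathrm m(\Gamma_1)$. The telescopic definition only gives $a_2>a_1 n$ for minimal generators $n$ of $\Gamma_1$, which includes $n=n_{m-1}'$, so $a_2>a_1 n_{m-1}'$. Thus taking $y=n_{m-1}'$ gives $a_2>a_1 y$ immediately; what remains is $n_{m-1}'\geq\mathrm m(\Gamma)=a_1 n_1'$. This last inequality is the real content: it says the largest generator of $\Gamma_1$ is at least $a_1$ times the smallest. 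Since $a_1$ lies in $\Gamma_1$ and is a non-minimal generator, $a_1\geq 2n_1'$ is possible, so $n_{m-1}'\geq a_1 n_1'$ is \emph{not} automatic—I would need to revisit whether the correct inductive statement should instead be phrased with $y=n_m$ (the ambient top generator) rather than $n_{m-1}'$, proving $\sAp(\Gamma,z)\geq n_1\lceil z/n_m\rceil$ and feeding \emph{this} (with its own $y=n_m\geq\mathrm m(\Gamma)$ clearly true) into the next gluing step, checking $a_2'>a_1' n_m$ as part of the telescopic chain. With that reformulation the induction closes cleanly: the statement to prove \emph{is} precisely the hypothesis shape of Theorem \ref{thm16}, $y=n_m\geq n_1=\mathrm m(\Gamma)$ is trivial, and the telescopic condition $a_2>a_1 n$ for the top generator $n$ of $\Gamma_1$ is exactly $a_2>a_1 y$. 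Finally, $\mathrm E(\Gamma,2)=n_1$ follows because $\sAp(\Gamma,z)\geq n_1$ for all $z\geq 1$ gives $\mathrm E(\Gamma,2)\geq n_1$, while $\sAp(\Gamma,n_1)=n_1$ (as $n_1\in\Gamma$, by Lemma \ref{apery-size}) gives the reverse inequality, using the reduction $\mathrm E(\Gamma,2)=\min\{\sAp(\Gamma,z)\mid 1\leq z\leq\mathrm m(\Gamma)\}$ recalled before Theorem \ref{thm16}.
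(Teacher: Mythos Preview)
Your overall plan---induction on $m$ with Theorem~\ref{thm16} as the engine---is exactly the paper's argument. The paper's proof is three lines: set $a_1=d=\gcd(n_1,\dots,n_{m-1})$, $a_2=n_m$, $\Gamma_1=\langle n_1/d,\dots,n_{m-1}/d\rangle$; the inductive hypothesis gives $\sAp(\Gamma_1,z)\ge \mathrm m(\Gamma_1)\lceil z/(n_{m-1}/d)\rceil$, i.e.\ $y=n_{m-1}/d$; the telescopic condition $n_m>n_{m-1}=a_1y$ gives $a_2>a_1y$; apply Theorem~\ref{thm16}.

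You correctly flag a genuine wrinkle: Theorem~\ref{thm16} as stated asks for $y\ge\mathrm m(\Gamma)$, the multiplicity of the \emph{glued} semigroup, and with $y=n_{m-1}/d$ this can fail---your own diagnosis $n_{m-1}/d\ge a_1 n_1/d=n_1$ is not automatic, and indeed for $\Gamma=\langle 4,6,9\rangle$ one gets $y=3<4=\mathrm m(\Gamma)$. Your attempted rescue in the last paragraph does not work: when you write ``$y=n_m\ge n_1=\mathrm m(\Gamma)$ is trivial,'' you are checking the \emph{output} parameter $a_2=n_m$ of Theorem~\ref{thm16} against $\mathrm m(\Gamma)$, whereas the hypothesis demands that the \emph{input} parameter $y=n_{m-1}/d$ dominate $\mathrm m(\Gamma)$. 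These are different quantities, and the latter inequality is exactly the one you already showed can fail.

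The actual fix is that the condition $y\ge\mathrm m(\Gamma)$ in Theorem~\ref{thm16} is a slip: it is nowhere used in the proof of that theorem (trace through both the $k\ge 0$ and $k<0$ chains; only $a_2>a_1y$ and the bound on $\sAp(\Gamma_1,\cdot)$ are invoked). It should read $y\ge\mathrm m(\Gamma_1)$, which is automatic from evaluating the hypothesis $\sAp(\Gamma_1,z)\ge\mathrm m(\Gamma_1)\lceil z/y\rceil$ at $z=\mathrm m(\Gamma_1)$. With that correction the induction closes immediately, as both you and the paper set it up. Your derivation of $\mathrm E(\Gamma,2)=n_1$ from the inequality is fine.
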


\begin{proof}
We proceed by induction on $m$. For $\Gamma=\mathbb{N}$, the result is obviously true. Now suppose $\Gamma=\langle n_1,n_2, \dots, n_m\rangle$ is telescopic, that is $\Gamma=d\Gamma_1 + n_m\mathbb{N}$ where $d=\gcd(n_1,\dots,n_{m-1})$, $\Gamma_1=\langle n_1/d,\dots,n_m/d\rangle$ is telescopic, and $n_m> n_{m-1}$. By induction hypothesis $\#\Ap(\Gamma_1,z)\geq m(\Gamma_1)\lceil\frac{z}{n_{m-1}/d}\rceil$ for all $z\in \mathbb{Z}$, and so by Theorem \ref{thm16} we get $\#\Ap(\Gamma,z)\geq m(\Gamma)\lceil \frac{z}{n_m}\rceil$.
\end{proof}

If we take $y=1$ and $\Gamma_1=\mathbb N$ in Theorem \ref{thm16}, we obtain the following consequence.
\begin{cor}
Let $\Gamma=a_1\mathbb{N}+a_2\Gamma_2$ be a gluing of numerical semigroups.  If $a_2>a_1$, then 
$\sAp(\Gamma,z)\geq  a_1\lceil \frac{z}{a_2}\rceil$ for all $z\in\mathbb{Z}$; hence $\mathrm E(\Gamma,2)=a_1=\mathrm m(\Gamma)$. 
\end{cor}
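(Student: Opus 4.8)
The plan is to read the statement off from Theorem~\ref{thm16} by specializing to $\Gamma_1=\mathbb{N}$, so that the gluing $\Gamma=a_1\mathbb N+a_2\Gamma_2$ is exactly the one appearing there, and by choosing $y=1$. First I would check the Ap\'ery-set hypothesis of Theorem~\ref{thm16} for $\Gamma_1=\mathbb N$: since $\Ap(\mathbb N,z)=\{0,1,\dots,z-1\}$ when $z>0$ and $\Ap(\mathbb N,z)=\emptyset$ when $z\le 0$, one has $\sAp(\mathbb N,z)=\max\{z,0\}\ge z=\lceil z/1\rceil=\mathrm m(\mathbb N)\lceil z/y\rceil$ for every $z\in\mathbb Z$ with $y=1$. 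Since $\mathrm m(\mathbb N)=1$, this is the largest-generator choice of $y$ for $\mathbb N$, and with $y=1$ the numerical condition $a_2>a_1y$ of Theorem~\ref{thm16} is precisely our hypothesis $a_2>a_1$. Thus Theorem~\ref{thm16} yields $\sAp(\Gamma,z)\ge\mathrm m(\Gamma)\lceil z/a_2\rceil$ for all $z\in\mathbb Z$, which is the first assertion.

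It then remains to identify $\mathrm m(\Gamma)$ and to deduce the value of $\mathrm E(\Gamma,2)$. For the multiplicity I would argue directly: every nonzero element of $\Gamma$ has the form $a_1n+a_2s$ with $n\in\mathbb N$, $s\in\Gamma_2$, and $(n,s)\neq(0,0)$; if $n\ge 1$ it is at least $a_1$, while if $n=0$ then $s\neq 0$, so $a_2s\ge a_2>a_1$. Hence every nonzero element of $\Gamma$ is at least $a_1$, and since $a_1=a_1\cdot 1\in\Gamma$ we get $\mathrm m(\Gamma)=a_1$. For the Feng--Rao number I would use the reformulation $\mathrm E(\Gamma,2)=\min\{\sAp(\Gamma,z)\mid z\in\mathbb N\setminus\{0\}\}$: the bound just proved gives $\sAp(\Gamma,z)\ge\mathrm m(\Gamma)\lceil z/a_2\rceil\ge\mathrm m(\Gamma)$ for every $z\ge 1$, so $\mathrm E(\Gamma,2)\ge\mathrm m(\Gamma)$; conversely, taking $z=\mathrm m(\Gamma)\in\Gamma$ and using Lemma~\ref{apery-size} gives $\sAp(\Gamma,\mathrm m(\Gamma))=\mathrm m(\Gamma)$, so $\mathrm E(\Gamma,2)\le\mathrm m(\Gamma)$. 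Therefore $\mathrm E(\Gamma,2)=\mathrm m(\Gamma)=a_1$.

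The only point requiring care is the apparent mismatch between the literal hypothesis $y\ge\mathrm m(\Gamma)$ of Theorem~\ref{thm16} and the value $y=1$ used here, since in any genuine gluing one has $\mathrm m(\Gamma)=a_1\ge 2$. This is resolved by inspecting the proof of Theorem~\ref{thm16}: the only properties of $y$ actually invoked there are $y>0$ and $a_2>a_1y$ (the condition $y\ge\mathrm m(\Gamma)$ is stated only to match the characterization in the preceding Remark), so the argument applies verbatim with $y=1$. Equivalently, one may simply record a version of Theorem~\ref{thm16} with the hypothesis on $y$ weakened to $y>0$ and then specialize; everything else is a direct substitution together with the elementary computation of $\mathrm m(\Gamma)$.
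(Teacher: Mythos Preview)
Your proposal is correct and follows exactly the paper's approach: the paper's entire proof is the single sentence ``If we take $y=1$ and $\Gamma_1=\mathbb N$ in Theorem~\ref{thm16}, we obtain the following consequence,'' and you have simply filled in the details of that specialization. Your observation about the hypothesis $y\ge\mathrm m(\Gamma)$ in Theorem~\ref{thm16} is apt---it is not used anywhere in that proof (and indeed is almost certainly a typo for $y\ge\mathrm m(\Gamma_1)$, which with $\Gamma_1=\mathbb N$ gives $y\ge 1$)---so your resolution is exactly right.
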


As we mentioned above, if $\Gamma$ has embedding dimension two, then $\mathrm E(\Gamma,r)$ corresponds with the $r^{th}$ smallest entry of $\Gamma$. However this is no longer true for $r>2$ in a telescopic numerical semigroup.

\begin{ex}
Let $\Gamma=\langle  6, 10, 11 \rangle$. Then $\Gamma$ is a telescopic numerical semigroup, 
\[\Gamma =\{0, 6, 10, 11, 12, 16, 17, 18, 20, 21, 22, 23, 24, 26,\to\},\] and $\mathrm E(\Gamma,3)=9<10$.
\end{ex}

By \cite[Chapter 8]{ns} a numerical semigroup $\Gamma\neq \mathbb{N}$ is free if and only if there exists an arrangement of its minimal generators $(n_1,\dots, n_e)$ such that for
$d_i:=\gcd(n_1,\ldots, n_{i-1})$ and $c_i:=d_i/d_{i+1}$
we have $d_i>d_{i+1}$ and $c_in_i\in \langle n_1,\ldots, n_{i-1}\rangle$ for $i\in\{2,\ldots, e\}$.
In this case we will say that $\Gamma$ is free with respect to the arrangement $(n_1,\ldots, n_e)$. 

\begin{lem}\label{lemBound}
Let $\Gamma=a_1\Gamma_1+a_2\mathbb{N}$ be a gluing of numerical semigroups and $z$ a positive integer.  We may write $z=a_1\alpha+a_2\beta$ with $0\leq \beta<a_1$ and $\alpha\in\mathbb{Z}$.  Then we have the following:
\[
\textstyle\sAp(\Gamma,z)>(a_1-\beta)\left(\sAp(\Gamma_1,\alpha)+\frac{a_2\beta}{a_1}\right).
\]
\begin{enumerate}[$\bullet$]
\item If $\alpha> 0$ (this includes the case $\beta=0$), then $\sAp(\Gamma,z)\geq a_1\mathrm{E}(\Gamma_1,2)$.
\item If $\beta\in\{2,3,\hdots, a_1-2\}$, then $\sAp(\Gamma,z)> a_2$.
\item If $\beta\in\{1,a_1-1\}$, then $\sAp(\Gamma,z)> \frac{a_1-1}{a_1}a_2$.
\end{enumerate}
 \[\textrm{Thus }\quad\textstyle\mathrm{E}(\Gamma,2)\geq\min\left\{a_1\mathrm E(\Gamma_1,2),\ \frac{(a_1-1)a_2}{a_1}\right\}.\]
\end{lem}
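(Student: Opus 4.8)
The plan is to prove the displayed inequality chain in Lemma \ref{lemBound} by exploiting Theorem \ref{lem11}(3), which gives the exact value
\[
\sAp(\Gamma,z)=\beta\cdot\sAp(\Gamma_1,\alpha+a_2)+(a_1-\beta)\cdot\sAp(\Gamma_1,\alpha).
\]
First I would establish the master inequality $\sAp(\Gamma,z)>(a_1-\beta)\bigl(\sAp(\Gamma_1,\alpha)+\tfrac{a_2\beta}{a_1}\bigr)$. Since $a_2$ is not a minimal generator of $\mathbb N$ only in the trivial sense, but $a_2\in\Gamma_1$ (by the definition of gluing $a_2$ lies in $\Gamma_1$), we may apply Lemma \ref{subadditivity}(b) or Lemma \ref{apery-size} to control $\sAp(\Gamma_1,\alpha+a_2)$ from below in terms of $\sAp(\Gamma_1,\alpha)$ and $a_2$. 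Concretely, Lemma \ref{apery-size} gives $\sAp(\Gamma_1,\alpha+a_2)\ge \alpha+a_2$ when $\alpha+a_2>0$, and more usefully, combining with the identity $\sAp(\Gamma_1,\alpha+a_2)-\sAp(\Gamma_1,\alpha)$: when $\alpha\ge 0$ one has $\sAp(\Gamma_1,\alpha+a_2)\ge \sAp(\Gamma_1,\alpha)+a_2$ by subadditivity-type reasoning, and when $\alpha<0$ the fact that $\sAp(\Gamma_1,w)\ge\max\{w,0\}$ still forces $\beta\cdot\sAp(\Gamma_1,\alpha+a_2)\ge\beta\cdot\max\{\alpha+a_2,0\}$. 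Feeding these bounds into the exact formula and doing the arithmetic $\beta\cdot(\text{something})+(a_1-\beta)\sAp(\Gamma_1,\alpha)$, one extracts a term $(a_1-\beta)\cdot\tfrac{a_2\beta}{a_1}$ after distributing the $\beta a_2$ contribution. The strictness comes from the fact that $A_i$ is always nonempty (Lemma \ref{apery-size}), so each $\sAp(\Gamma_1,\cdot)\ge 1$, or from $\sAp(\Gamma_1,\alpha+a_2)>\alpha$ strictly; I would pin down exactly where the strict inequality enters.

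Next I would derive the three bulleted cases from the master inequality. For $\alpha>0$: here $\sAp(\Gamma_1,\alpha)\ge\mathrm E(\Gamma_1,2)$ directly from the definition of $\mathrm E(\Gamma_1,2)=\min\{\sAp(\Gamma_1,w)\mid w\ge 1\}$, and also $\sAp(\Gamma_1,\alpha+a_2)\ge\mathrm E(\Gamma_1,2)$ since $\alpha+a_2\ge 1$; plugging both into the exact formula of Theorem \ref{lem11}(3) gives $\sAp(\Gamma,z)\ge\beta\mathrm E(\Gamma_1,2)+(a_1-\beta)\mathrm E(\Gamma_1,2)=a_1\mathrm E(\Gamma_1,2)$. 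The case $\beta=0$ is subsumed since then $z=a_1\alpha$ with $\alpha>0$ and Proposition \ref{apery-multiple} gives $\sAp(\Gamma,z)=a_1\sAp(\Gamma_1,\alpha)\ge a_1\mathrm E(\Gamma_1,2)$. For the remaining cases $\alpha\le 0$, so $\sAp(\Gamma_1,\alpha)\ge 0$ and the master inequality collapses to $\sAp(\Gamma,z)>(a_1-\beta)\cdot\tfrac{a_2\beta}{a_1}=\tfrac{(a_1-\beta)\beta}{a_1}a_2$. When $\beta\in\{2,\dots,a_1-2\}$ the integer function $\beta\mapsto\beta(a_1-\beta)$ is minimized at the endpoints of this range, giving $\beta(a_1-\beta)\ge 2(a_1-2)\ge a_1$ (valid for $a_1\ge 4$; the cases $a_1\le 3$ leave the range empty), hence $\sAp(\Gamma,z)>a_2$. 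When $\beta\in\{1,a_1-1\}$ we get $\beta(a_1-\beta)=a_1-1$, hence $\sAp(\Gamma,z)>\tfrac{a_1-1}{a_1}a_2$.

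Finally, to conclude $\mathrm E(\Gamma,2)\ge\min\{a_1\mathrm E(\Gamma_1,2),\tfrac{(a_1-1)a_2}{a_1}\}$, recall $\mathrm E(\Gamma,2)=\min\{\sAp(\Gamma,z)\mid z\in\{1,\dots,\mathrm m(\Gamma)\}\}$. For any such $z$, write $z=a_1\alpha+a_2\beta$ with $0\le\beta<a_1$. If $\alpha>0$ (or $\beta=0$) the first bullet gives $\sAp(\Gamma,z)\ge a_1\mathrm E(\Gamma_1,2)$. Otherwise $\alpha\le 0$ and $\beta\ge 1$, and one of the last two bullets applies, giving $\sAp(\Gamma,z)>\tfrac{(a_1-1)a_2}{a_1}$ (the bound from $\beta\in\{2,\dots,a_1-2\}$, namely $\sAp(\Gamma,z)>a_2$, is stronger). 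Taking the minimum over all $z$ yields the claimed bound. The main obstacle I anticipate is the bookkeeping in the $\alpha<0$ sub-case of the master inequality: one must be careful that $\sAp(\Gamma_1,\alpha+a_2)$ can be as small as $\max\{\alpha+a_2,0\}$ yet still large enough that the $\tfrac{a_2\beta}{a_1}$ term survives after subtracting off the part already accounted for — this requires the clean identity $\sAp(\Gamma_1,w)\ge\max\{w,0\}$ from Lemma \ref{apery-size} together with a careful split of $a_2$ into $\alpha$-dependent and $\alpha$-independent contributions, and verifying strictness rather than just $\ge$.
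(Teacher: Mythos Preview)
Your overall architecture matches the paper's: start from the exact formula of Theorem~\ref{lem11}(3), derive the master inequality, then read off the three bulleted cases and the final minimum exactly as you describe. Your treatment of the bullets and the concluding $\mathrm E(\Gamma,2)$ bound is essentially identical to the paper's proof.

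The gap is in your derivation of the master inequality. First, the claim ``when $\alpha\ge 0$ one has $\sAp(\Gamma_1,\alpha+a_2)\ge \sAp(\Gamma_1,\alpha)+a_2$ by subadditivity-type reasoning'' has the inequality reversed: Lemma~\ref{subadditivity}(a) gives $\sAp(\Gamma_1,\alpha+a_2)\le\sAp(\Gamma_1,\alpha)+\sAp(\Gamma_1,a_2)=\sAp(\Gamma_1,\alpha)+a_2$, which is useless here. Fortunately that claim is not needed. The paper uses only Lemma~\ref{apery-size} in the form $\sAp(\Gamma_1,\alpha+a_2)\ge\alpha+a_2$ (valid for \emph{all} integers $\alpha$, since $\sAp(\Gamma_1,x)=x+\sAp(\Gamma_1,-x)\ge x$), together with the one step you are groping for but never state: since $z=a_1\alpha+a_2\beta>0$ we have $\alpha>-\tfrac{a_2\beta}{a_1}$ strictly. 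Substituting,
\[
\beta\,\sAp(\Gamma_1,\alpha+a_2)\ \ge\ \beta(\alpha+a_2)\ >\ \beta\Bigl(-\tfrac{a_2\beta}{a_1}+a_2\Bigr)=\tfrac{a_2\beta(a_1-\beta)}{a_1}=(a_1-\beta)\cdot\tfrac{a_2\beta}{a_1},
\]
and adding $(a_1-\beta)\sAp(\Gamma_1,\alpha)$ to both sides gives the master inequality and its strictness in one stroke. No case split on the sign of $\alpha$ is required, and no ``distribution of the $\beta a_2$ contribution'' beyond this substitution is needed.
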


\begin{proof}
By Theorem \ref{lem11}, we have the first step below.
\begin{align*}
\sAp(\Gamma,z)&=(a_1-\beta)\sAp(\Gamma_1,\alpha)+\beta\cdot\sAp(\Gamma_1,\alpha+a_2)\\
&\geq (a_1-\beta)\sAp(\Gamma_1,\alpha)+\beta(\alpha+a_2)\\
& > \textstyle (a_1-\beta)\sAp(\Gamma_1,\alpha)+\beta\left(-\frac{a_2\beta}{a_1}+a_2\right)\\
& = \textstyle (a_1-\beta)\left(\sAp(\Gamma_1,\alpha)+\frac{a_2\beta}{a_1}\right).
\end{align*}
The second step above follows from Lemma \ref{apery-size}.  The third step follows from the assumption $z=a_1\alpha+a_2\beta>0$, and the last step is elementary algebra. 

If $\alpha>0$, then $\sAp(\Gamma_1,\alpha+a_2)\geq\sAp(\Gamma_1,\alpha)\geq\mathrm{E}(\Gamma_1,2)$; hence 
\[\sAp(\Gamma,z)\geq (a_1-\beta)\mathrm{E}(\Gamma_1,2)+\beta\mathrm{E}(\Gamma_1,2)=a_1\mathrm{E}(\Gamma_1,2).\]
If $\beta\in\{2,3,\hdots, a_1-2\}$, then $a_1\geq 4$.  Since $\beta(a_1-\beta)$ has a unique local maximum at $\beta=\frac{a_1}{2}$ we have $\beta(a_1-\beta)\geq2(a_1-2)$.  Since $a_1\geq 4$, we have $2(a_1-2)\geq a_1$.  Thus 
\[ 
\textstyle\sAp(\Gamma,z)> \beta(a_1-\beta)\frac{a_2}{a_1}\geq a_1\frac{a_2}{a_1}=a_2.
\]

If $\beta\in\{1,a_1-1\}$, then $(a_1-\beta)\beta=a_1-1$. Thus $\sAp(\Gamma,z)>(a_1-\beta)\frac{a_2\beta}{a_1}=\frac{a_1-1}{a_1}a_2$.
\end{proof}

\begin{thm}\label{ap-min}
Let $\Gamma$ be a free numerical semigroup with respect to the arrangement 
$(n_1,\ldots, n_e)$ of its minimal generators. If $\frac{c_i-1}{c_i} n_i\ge 
n_1$ for $i\in\{3,\ldots,e\}$, then 
$\mathrm E(\Gamma,2)=\m(\Gamma)$.
\end{thm}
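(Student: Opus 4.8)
The plan is to argue by induction on the number of minimal generators $e$, using the structure $\Gamma = d_e\Gamma' + n_e\mathbb{N}$ that comes from freeness with respect to $(n_1,\ldots,n_e)$. Here $d_e=\gcd(n_1,\ldots,n_{e-1})$, the semigroup $\Gamma'=\langle n_1/d_e,\ldots,n_{e-1}/d_e\rangle$ is free with respect to the induced arrangement, and its minimal generators are $n_i/d_e$ for $i\in\{1,\ldots,e-1\}$. Note that $\m(\Gamma)=d_e\,\m(\Gamma')=d_e\,(n_1/d_e)=n_1$, so what we want is $\mathrm E(\Gamma,2)=n_1$. The base cases $e\le 2$ are already known: $\Gamma=\mathbb N$ is trivial, and for embedding dimension two $\mathrm E(\Gamma,2)=\m(\Gamma)$ by the two-generated case recalled in the introduction. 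The key point to set up in the inductive step is to verify that the hypothesis $\frac{c_i-1}{c_i}n_i\ge n_1$ for $i\in\{3,\ldots,e\}$ passes down to $\Gamma'$: for $\Gamma'$ the relevant quantities are $c_i$ (unchanged, since $c_i=d_i/d_{i+1}$ is a ratio of gcd's that all scale by $d_e$) and the generator $n_i/d_e$, and one needs $\frac{c_i-1}{c_i}\cdot\frac{n_i}{d_e}\ge \frac{n_1}{d_e}$, which is exactly the hypothesis for $\Gamma$ divided by $d_e$. So the induction hypothesis gives $\mathrm E(\Gamma',2)=\m(\Gamma')=n_1/d_e$.

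Next I would invoke Lemma \ref{lemBound} with $a_1=d_e$, $\Gamma_1=\Gamma'$, $a_2=n_e$, which yields
\[
\textstyle\mathrm{E}(\Gamma,2)\ge\min\left\{d_e\,\mathrm E(\Gamma',2),\ \frac{(d_e-1)n_e}{d_e}\right\}
=\min\left\{n_1,\ \frac{(d_e-1)n_e}{d_e}\right\}.
\]
Here I am using $d_e\,\mathrm E(\Gamma',2)=d_e\cdot(n_1/d_e)=n_1$ from the induction. Now I want the second term in the $\min$ to be at least $n_1$, i.e. $\frac{d_e-1}{d_e}n_e\ge n_1$. Since $c_e=d_e/d_{e+1}=d_e/1=d_e$ (as $d_{e+1}=\gcd(n_1,\ldots,n_e)=1$), this is precisely $\frac{c_e-1}{c_e}n_e\ge n_1$, which is the $i=e$ instance of the hypothesis — and here $e\ge 3$ is exactly the range where the hypothesis is assumed. (When $e=2$ we are in the already-handled base case, so the restriction to $i\ge 3$ in the statement is not a gap.) Combining, $\mathrm E(\Gamma,2)\ge n_1=\m(\Gamma)$, and the reverse inequality $\mathrm E(\Gamma,2)\le\m(\Gamma)$ always holds since $\sAp(\Gamma,\m(\Gamma))=\m(\Gamma)$ (indeed $\m(\Gamma)\in\Gamma$). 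This forces equality.

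The main obstacle, such as it is, is bookkeeping rather than a genuine difficulty: one must be careful that the arrangement witnessing freeness of $\Gamma$ restricts correctly to an arrangement witnessing freeness of $\Gamma'$ (so that Lemma \ref{lemBound} and the induction hypothesis both apply to the same $\Gamma'$), and that the gcd data $d_i,c_i$ for $\Gamma'$ are the expected rescalings of those for $\Gamma$ — in particular that $c_e=d_e=a_1$, which is what makes the edge term $\frac{(a_1-1)a_2}{a_1}$ coming out of Lemma \ref{lemBound} match the $i=e$ hypothesis on the nose. One should also double-check the degenerate situation $d_e=1$: then $n_e$ would be a minimal generator of $\Gamma$ only if $e=1$, so in the gluing decomposition $d_e\ge 2$ automatically whenever $e\ge 2$, and the term $\frac{(d_e-1)n_e}{d_e}$ is genuinely positive. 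No step requires more than the lemmas already established, so once the indexing is pinned down the argument closes cleanly.
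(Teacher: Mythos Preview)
Your proof is correct and follows essentially the same route as the paper: induction on $e$ with base cases $e\le 2$, the gluing decomposition $\Gamma=d_e\Gamma'+n_e\mathbb N$, the observation that $c_e=d_e$ so the hypothesis at $i=e$ handles the boundary term from Lemma \ref{lemBound}, and the standard upper bound $\mathrm E(\Gamma,2)\le\m(\Gamma)$ to close. Your write-up is in fact a bit more careful about the bookkeeping (descent of the hypothesis to $\Gamma'$, $d_e\ge 2$) than the paper's own proof.
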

\begin{proof}
For $e=1$ we have $\Gamma=\mathbb N$. For $e=2$ the result is known to be true; see for instance \cite{D}.  Suppose that $e\geq 3$ and that the result is true for the case with $e-1$ generators.  Then $\mathrm E(\Gamma',2)=\m(\Gamma')=\frac{n_1}{d_e}$ since the sequence of $c_i$'s for $\Gamma'$ is that of $\Gamma$ after removing  the last one.  Note that $c_e=d_e$.  We  have that $\Gamma=c_e\Gamma'+n_e\mathbb{N}$.   We have
$\mathrm E(\Gamma,2)\geq\min\{c_e\mathrm E(\Gamma',2),\ \frac{(c_e-1)n_e}{c_e}\}$ by Lemma \ref{lemBound}.  Note that $c_e\mathrm E(\Gamma',2)=n_1=\m(\Gamma)$ and by hypothesis $\frac{(c_e-1)n_e}{c_e}\geq n_1=\m(\Gamma)$.  Thus $\mathrm E(\Gamma,2)\geq \m(\Gamma)$.  Since we always have $\mathrm E(\Gamma,2)\leq\m(\Gamma)$, the result follows.
\end{proof}


If we remove the condition $\frac{c_i-1}{c_i}n_i\ge n_1$, it may happen that $\mathrm E(\Gamma,2)<\mathrm m(\Gamma)$. 

\begin{ex} \label{ex1} Let $\Gamma=\langle 4,5,6\rangle$. Then $\Gamma=2\langle 2,3\rangle+5\mathbb N$ is a gluing of numerical semigroups. Notice that $\Gamma$ is free but not telescopic. Also  $\frac{1}{2}5=\frac{c_3-1}{c_3}n_3< n_1=4$. In this case we have 
\[\mathrm E(\Gamma,2)=\sAp(\Gamma,1)=|\{0, 4, 8\}|=3<4=\m(\Gamma).\]
\end{ex}

\begin{remark}
It is possible to apply Theorem \ref{lem11}  iteratively to find explicit formulas for $\sAp(\Gamma,z)$ when $\Gamma$ is a free numerical semigroups with more than two generators.  However, the complexity of these formulas tends to increase exponentially with the number of generators. Let $\M(x)=\max\{x,0\}$
 
For example suppose $\Gamma=\langle a,b,c\rangle$ is a three generated complete intersection numerical semigroup (and thus free).  Then up to a permutation of the generators  we may write $a=\sigma x$, $b=\sigma y$ and $c=c_x x+c_y y$ where $\sigma,x,y>1$, $x>c_y\geq 0$, $c_x\geq 0$, $\gcd(x,y)=1$ and $\gcd(\sigma,c)=1$. Given any integer $z$ we may write $z$ uniquely as 
	$z=z_\sigma\sigma+z_cc$ with $0\leq z_c< \sigma$, $z_\sigma=z_xx+z_yy$ and $0\leq z_y< x$. 
	
	If $z_y+c_y<x$, then $\sAp(\Gamma,z)$ may be expressed as
	\begin{multline*}
	(\sigma-z_c)\big((x-z_y)\M(z_x)+z_y\M(z_x+y)\big) \\
	 +\ 
	z_c\big((x-z_y-c_y)\M(z_x+c_x)+(u_y+c_y)\M(z_x+c_x+y)\big).
	\end{multline*}
	
	If $z_y+c_y\geq x$, then $\sAp(\Gamma,z)$ may be expressed as
	\begin{multline*}
	(\sigma-z_c)\left((x-z_y)\M(z_x)+z_y\M(z_x+y)\right) \\
	\quad +\ z_c\left((2x-z_y-c_y)\M(2z_x+y)+(z_y+c_y-x)\M(z_x+c_x+2y)\right).
	\end{multline*}	
	
\end{remark}

\section{Some interesting examples}

In this section, we apply the above calculations to some interesting examples, coming from the theory of AG codes. 

\subsection{Generalized Hermitian semigroups}\label{sec:GH}

As a generalization of classical Hermitian semigroups, $\langle q,q+1\rangle$ with $q$ an integer greater than 2, we consider generalized Hermitian semigroups. These are three generated semigroups depending on two parameters: $q$ as above and an integer $r>2$. The generalized Hermitian curve $\chi_{r}$ with parameters $q$ and $r$, is defined over $\mathbb{F}_{q^{r}}$, by the equation 
\[
Y^{q^{r-1}}+\cdots+Y^{q}+Y=X^{1+q}+\cdots+X^{q^{r-2}+q^{r-1}},
\]
and has $q^{2r-1}+1$ rational points over $\mathbb{F}_{q^{r}}$. Its Weierstrass semigroup at the unique pole of $X$ is precisely $H_{q,r}=\langle q^{r-1}, q^{r-1}+q^{r-2}, q^r+1 \rangle$.  For background on this topic see \cite{castle}. Note that $\chi_{2}$ is the classical Hermitian curve. 

Observe that $H_{q,r}$ is the gluing of $\langle q,q+1\rangle$ and $\mathbb N$ given by $H_{q,r}=q^{r-2}\langle q,q+1\rangle+(q^r+1)\mathbb N$.  Thus these are telescopic numerical semigroups, and Corollary \ref{cor:E-tel} yields 
$\mathrm E(H_{q,r},2)=q^{r-1}$. 

\subsection{Generalized Suzuki numerical semigroups}\label{sec:Suzuki}

Given positive integers $p$ and $n$, define the generalized Suzuki numerical semigroup as 
\[
S_{p,n} = \langle p^{2n+1}, p^{2n+1}+p^n, p^{2n+1}+p^{n+1},p^{2n+1}+p^{n+1}+1\rangle
\]
(see \cite{suzuki, gen-suzuki}). The cases where $p=2$ and $n$ varies are called Suzuki numerical semigroups.  Suzuki numerical semigroups come from the Suzuki curve $\chi^{n}$ defined by the equation 
\[
Y^{2^{2n+1}}+Y=X^{2^{2n}}(X^{2^{2n+1}}+X).
\] 
These curves have numerous rational points over $\mathbb{F}_{2^{2n+1}}$, which makes them useful for coding theory purposes. 
By defining 
$\Gamma_1(p,n):= \langle p^{n+1}, p^{n+1}+1, p^{n+1}+p\rangle$, we obtain the gluing 
\[
S_{p,n} = p^n \Gamma_1(p,n) + (p^{2n+1} + p^{n+1}+1)\mathbb{N}.
\]
Notice that $\Gamma_1(p,n)= p \langle p^n, p^n+1\rangle + (p^{n+1}+1)\mathbb{N}$ is also a gluing
but is not telescopic.

\begin{lem}
$\mathrm E(\Gamma_1(p,n),2) = p^{n+1} -p^n +1$.
\end{lem}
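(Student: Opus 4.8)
The plan is to compute $\mathrm E(\Gamma_1(p,n),2)$ directly from the formula
$\mathrm E(\Gamma,2)=\min\{\sAp(\Gamma,z)\mid z\in\{1,\dots,\m(\Gamma)\}\}$, using the gluing
$\Gamma_1(p,n)= p\langle p^n,p^n+1\rangle + (p^{n+1}+1)\mathbb N$ together with Theorem \ref{lem11}. Write $a_1=p^{n+1}+1$, $a_2=p^{n+1}+1$... wait --- more carefully, in the gluing $\Gamma_1=p\Gamma_0+(p^{n+1}+1)\mathbb N$ with $\Gamma_0=\langle p^n,p^n+1\rangle$ we have glue factors $p$ (multiplying $\Gamma_0$) and $p^{n+1}+1$ (multiplying $\mathbb N$), and $\m(\Gamma_1)=p^{n+1}$. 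Since $p^{n+1}+1$ is \emph{not} much bigger than $p\cdot\m(\Gamma_0)=p\cdot p^n=p^{n+1}$, Theorem \ref{thm16} does not apply, so one must work with the explicit expression in Theorem \ref{lem11}(3): for $z=a_1\alpha+a_2\beta$ with $0\le\beta<a_1$ we get $\sAp(\Gamma_1,z)=(a_1-\beta)\sAp(\Gamma_0,\alpha)+\beta\,\sAp(\Gamma_0,\alpha+a_2)$, here with $a_1=p$ and $a_2=p^{n+1}+1$.

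Next I would reduce the search range. By Lemma \ref{lemBound} applied to this gluing (with its $a_1=p$, $a_2=p^{n+1}+1$) we already get $\mathrm E(\Gamma_1,2)\ge\min\{p\,\mathrm E(\Gamma_0,2),\ \frac{(p-1)(p^{n+1}+1)}{p}\}$. Since $\Gamma_0=\langle p^n,p^n+1\rangle$ is two-generated, $\mathrm E(\Gamma_0,2)=\m(\Gamma_0)=p^n$, so $p\,\mathrm E(\Gamma_0,2)=p^{n+1}$, while $\frac{(p-1)(p^{n+1}+1)}{p} = p^{n+1}+1 - \frac{p^{n+1}+1}{p}$, which for $n\ge 1$ is strictly less than $p^{n+1}$ but larger than $p^{n+1}-p^n+1$ precisely when... this needs checking, and in fact Lemma \ref{lemBound} alone gives a bound slightly below the claimed value, so the $\beta\in\{1,p-1\}$ cases must be examined by hand rather than through the crude inequality. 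The cleaner route: the only candidates for the minimum have $\alpha\le 0$ (if $\alpha>0$ then $\sAp(\Gamma_1,z)\ge p\,\mathrm E(\Gamma_0,2)=p^{n+1}>p^{n+1}-p^n+1$), so I would set $\alpha\le0$ and use Remark \ref{remark12} (the two-generated formula) $\sAp(\Gamma_0,w)=(p^n+1-u)\M(v)+u\,\M(v+p^n)$ where $w=up^n+v(p^n+1)$, $0\le u<p^n+1$, to turn $\sAp(\Gamma_1,z)$ into an explicit piecewise-linear function of the digit data of $z$ in the mixed radix given by the two gluings.

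Then I would minimize that explicit function. Expanding, $z = p\alpha + (p^{n+1}+1)\beta$ with $0\le\beta<p$, and $\alpha$, $\alpha+p^{n+1}+1$ each further decomposed against $\langle p^n,p^n+1\rangle$; I expect the minimizing $z$ to be a small positive integer, and the natural guess — matching the pattern $\mathrm E = \m(\Gamma)-\rho_2(\text{inner})+1$ type answers — is $z=1$, giving $\beta$ and $\alpha$ from $1 = p\alpha+(p^{n+1}+1)\beta$. One checks $\sAp(\Gamma_1,1)$ equals $p^{n+1}-p^n+1$ by direct computation of the relevant Apéry set (its elements congruent to $1$ mod the multiplicity), and that no other $z\in\{1,\dots,p^{n+1}\}$ does better. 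The main obstacle is the lower bound: showing $\sAp(\Gamma_1,z)\ge p^{n+1}-p^n+1$ for \emph{all} $z$ with $\alpha\le 0$ and $\beta\in\{1,\dots,p-1\}$, since Lemma \ref{lemBound}'s generic estimate $\frac{(p-1)(p^{n+1}+1)}{p}$ undershoots the target; handling this requires the sharper $\M$-formula and a careful case split on whether the inner digits $v,v+p^n$ (and $v',v'+p^n$ for $\alpha+a_2$) are negative or not, exploiting that $a_2=p^{n+1}+1$ is just one more than $p\cdot p^n$ to control the carries. I would organize that case analysis to show the per-summand contributions always sum to at least $p^{n+1}-p^n+1$, with equality exactly at $z=1$.
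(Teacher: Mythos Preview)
Your setup is right and your upper bound via $z=1$ is exactly what the paper does: with $a_1=p$, $a_2=p^{n+1}+1$, $\beta=1$, $\alpha=-p^n$, Theorem \ref{lem11}(3) gives
\[
\sAp(\Gamma_1(p,n),1)=\sAp(\Gamma_0,p^{n+1}-p^n+1)+(p-1)\,\sAp(\Gamma_0,-p^n)=p^{n+1}-p^n+1,
\]
since $p^{n+1}-p^n+1=(p-2)p^n+(p^n+1)\in\Gamma_0$ and $-(-p^n)=p^n\in\Gamma_0$.

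The gap is in your assessment of the lower bound. You write that Lemma \ref{lemBound} ``undershoots the target'' and that a detailed case analysis on the inner $\M$-formula is the ``main obstacle''. But compute the bound Lemma \ref{lemBound} actually gives:
\[
\frac{(a_1-1)a_2}{a_1}=\frac{(p-1)(p^{n+1}+1)}{p}=p^{n+1}-p^n+1-\frac{1}{p}.
\]
Since $\mathrm E(\Gamma_1(p,n),2)$ is an \emph{integer} (it is a minimum of cardinalities), the inequality $\mathrm E(\Gamma_1(p,n),2)\ge p^{n+1}-p^n+1-\tfrac{1}{p}$ immediately rounds up to $\mathrm E(\Gamma_1(p,n),2)\ge p^{n+1}-p^n+1$. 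Combined with $p\,\mathrm E(\Gamma_0,2)=p^{n+1}$, Lemma \ref{lemBound} already delivers the lower bound with no further work. The paper's proof is precisely this two-line observation; your proposed case split on the carries in the mixed-radix decomposition is unnecessary.
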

\begin{proof}
Notice that 
\[
p \mathrm{E}(\langle p^n, p^n+1\rangle,2)=p^{n+1}>p^{n+1}-p^{n}+1-\frac{1}{p}=\frac{(p-1)(p^{n+1}+1)}{p}.
\]
Thus by Lemma \ref{lemBound} we have
$\mathrm E(\Gamma_1(p,n),2)\geq p^{n+1}-p^{n}+1-\frac{1}{p}$.

By rounding up to the nearest integer we obtain $\mathrm{E}(\Gamma_1(p,n),2) \geq p^{n+1}-p^{n}+1$.

Now write $1=p(-p^{n})+ (p^{n+1}+1)$. Using the notation in Theorem \ref{lem11} for $z=1$ we have $\alpha=-p^n$, $\beta=1$, $a_1=p$ and $a_2=p^{n+1}+1$.  Thus
\begin{align*}
\#\Ap(\Gamma_1(p,n),1)&=\#\Ap(\langle p^n,p^n+1\rangle,-p^n + p^{n+1}+1) + (p-1) \#\Ap(\langle p^n,p^n+1\rangle, -p^n)\\
&=p^{n+1}-p^n+1,
\end{align*}
and the result follows.
\end{proof}

\begin{thm}\label{thm-Suzuki}
$\mathrm E(S_{p,n},2)=p^{2n+1}-p^{2n}+p^n.$
\end{thm}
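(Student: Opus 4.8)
The plan is to establish matching lower and upper bounds for $\mathrm{E}(S_{p,n},2)$, both extracted from the gluing decomposition $S_{p,n}=p^n\Gamma_1(p,n)+(p^{2n+1}+p^{n+1}+1)\mathbb{N}$ together with the value $\mathrm{E}(\Gamma_1(p,n),2)=p^{n+1}-p^n+1$ established in the preceding lemma.

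For the lower bound I would apply Lemma \ref{lemBound} with $a_1=p^n$, $\Gamma_1=\Gamma_1(p,n)$ and $a_2=p^{2n+1}+p^{n+1}+1$, which yields
\[
\mathrm{E}(S_{p,n},2)\ \geq\ \min\left\{\,p^n\,\mathrm{E}(\Gamma_1(p,n),2),\ \frac{(p^n-1)(p^{2n+1}+p^{n+1}+1)}{p^n}\,\right\}.
\]
The first entry equals $p^n(p^{n+1}-p^n+1)=p^{2n+1}-p^{2n}+p^n$, whereas the second simplifies to $p^{2n+1}+1-p-p^{-n}$. I would then check that the first entry is the smaller of the two, so that $\mathrm{E}(S_{p,n},2)\geq p^{2n+1}-p^{2n}+p^n$.

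For the upper bound I would produce an explicit witness. Applying Proposition \ref{apery-multiple} with $z=1$ gives
\[
\sAp(S_{p,n},\,p^n)\ =\ p^n\cdot\sAp(\Gamma_1(p,n),1)\ =\ p^n(p^{n+1}-p^n+1)\ =\ p^{2n+1}-p^{2n}+p^n,
\]
where $\sAp(\Gamma_1(p,n),1)=p^{n+1}-p^n+1$ is exactly the computation carried out in the proof of the preceding lemma. Since $p^n$ is a positive integer, this forces $\mathrm{E}(S_{p,n},2)\leq p^{2n+1}-p^{2n}+p^n$; combined with the lower bound this gives the stated equality.

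The only step requiring any real care is deciding which entry realizes the minimum in Lemma \ref{lemBound}. This reduces to the inequality $p^{2n}-p^n-p+1\geq p^{-n}$, which holds comfortably for all $p\geq 2$ and $n\geq 1$: the left-hand side equals $p^n(p^n-1)-(p-1)\geq p(p-1)-(p-1)=(p-1)^2\geq 1>p^{-n}$. Everything else is a direct appeal to the structural results of Section 1 and to the already-computed value of $\mathrm{E}(\Gamma_1(p,n),2)$, so I do not expect any genuine obstacle beyond this elementary estimate.
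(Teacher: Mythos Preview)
Your proposal is correct and follows essentially the same route as the paper: apply Lemma~\ref{lemBound} to the gluing $S_{p,n}=p^n\Gamma_1(p,n)+(p^{2n+1}+p^{n+1}+1)\mathbb{N}$ for the lower bound, then use Proposition~\ref{apery-multiple} with $z=1$ together with the value $\sAp(\Gamma_1(p,n),1)=p^{n+1}-p^n+1$ from the preceding lemma for the matching upper bound. Your explicit verification that $p^{2n}-p^n-p+1\geq p^{-n}$ is a welcome addition, since the paper merely asserts which term realizes the minimum.
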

\begin{proof}
Again we can compute
\[
p^n \mathrm{E}(\Gamma_1(p,n),2)=p^{2n+1}-p^{2n}+p^n.
\]
and
\[
\frac{(p^n-1)(p^{2n+1}+p^{n+1}+1)}{p^n}=
p^{2n+1}-p +1 -\frac{1}{p^n}.
\]

Since the first of these values is the smallest, Lemma \ref{lemBound} gives us the inequality
\[
\mathrm E(S_{p,n},2)\geq p^{2n+1}-p^{2n}+p^n.
\]
From Proposition \ref{apery-multiple} we obtain
$\#\Ap(S_{p,n},p^n)=p^n\#\Ap(\Gamma_1(p,n),1)=p^{2n+1}-p^{2n}+p^n$, and the result follows. 
\end{proof}

\section{Application to AG codes}\label{sec:experiments}

Corollary \ref{cor:E-tel} for telescopic semigroups as well as Theorem \ref{thm-Suzuki} for Suzuki semigroups provide us with estimates for the second Hamming weight of codes in the array of AG codes corresponding to these numerical semigroups (see~\cite{K-P}). We recall briefly the definition of the generalized (Hamming) weights. In fact the support of a linear code $C$ is defined as
\[
{\rm supp}(C):=\{i \mid c_{i}\neq 0\;\;\mbox{for some ${\bf c}\in C$}\},
\]
and the $r^{\mathrm{th}}$ generalized weight of $C$ is then 
\[
{\mathrm d}_{r}(C):=\min\{\#\,{\rm supp}(C')\mid \mbox{$C'\preceq C$ with ${\rm dim}(C')=r$}\}, 
\]
where $C'\preceq C$ denotes that $C'$ is a linear subcode of $C$. 
Observe that the above definition only makes sense if $r\leq k$, where $k$ is the dimension of $C$. 

Let $C_{a}$ be a code in an array of codes as in~\cite{K-P} with associated semigroup $\Gamma$. For example, $C_{a}$ may be a one-point AG code associated to a divisor of the form $G=aP$. In this case $\Gamma$ would be the Weierstrass semigroup of the underlying curve at $P$, as explained in the introduction. We will consider cases where $\Gamma$ is telescopic as in Corollary \ref{cor:E-tel} or free with assumptions as in Theorem \ref{ap-min}, so that the second Feng-Rao number equals $n_1$, the multiplicity of the semigroup. We will also consider the case of the Suzuki semigroups. 

Since free semigroups are symmetric (see~\cite[Chapter 8]{ns}), the bound for the second Hamming weight given by the second Feng-Rao number gives the exact value of the second Feng-Rao distance for half of the elements in the interval $[c,2c-1]$, where $c=2g$ and $a=2g-1+\rho$ with $\rho\in\Gamma\setminus\{0\}$ (see~\cite{F-M}). 

Let $a\geq c$. Corollary \ref{cor:E-tel} implies that 
\begin{equation}\label{bound-mult}
{\mathrm d}_{2}(C_{a}) \geq \delta_{FR}^{2}(a+1) \geq a+2-2g + E_2 = a+2-2g + n_1. 
\end{equation}
In contrast following \cite[Theorem 2.8]{K-P}, one deduces that 
\begin{equation}\label{bound-KP}
{\mathrm d}_{2}(C_{a}) \geq \delta_{FR}(a+2)\geq a+3-2g.
\end{equation}  
Since $n_1 > 1$ the bound obtained from Corollary \ref{cor:E-tel} is an improvement. 

Finally, the Griesmer order bound (introduced in~\cite{D}) for the case $r=2$ shows that 
\begin{equation}\label{bound-GOB}
{\mathrm d}_{2}(C_{a}) \geq \mathrm{GOB}(a+1) := \delta_{FR}(a+1) + \left\lceil\displaystyle\frac{\delta_{FR}(a+1)}{q}\right\rceil, 
\end{equation}
where the code is defined over the finite field $\mathbb{F}_{q}$\/. In the case of codes constructed from generalized Hermitian semigroups, 
the underlying field for the corresponding codes would be $\mathbb{F}_{q^r}$. Consequently we would divide by $q^{r}$ instead $q$ in the formula above. 

In the following examples we compare our bound (\ref{bound-mult}) for the second Hamming weight with (\ref{bound-KP}) and (\ref{bound-GOB}). 
We will denote $\delta_{FR}(a)=\delta_a$ and $\delta^{2}_{FR}(a)=\delta^{2}_{a}$ for simplicity. 

\begin{ex}\label{ex:GH}

Consider the generalized Hermitian semigroups as in section \ref{sec:GH}, that is 
\[
\Gamma = H_{q,r}=\langle q^{r-1}, q^{r-1}+q^{r-2}, q^r+1 \rangle
\]
where $q$ is the power of a prime number and $r \geq 3$ (note that if $r=2$ the semigroup is Hermitian and is generated by only two elements). \\

First, consider the case $q=2$ and $r=3$.  Here $\Gamma = \langle 4, 6, 9 \rangle$ with genus $g=6$. 
Table \ref{table:GH-23} shows the index $a$ of the code $C_{a}$ together with four bounds for the second Hamming weight, namely bounds 
(\ref{bound-KP}), (\ref{bound-GOB}), (\ref{bound-mult}) and finally the actual second Feng-Rao distance $\delta^{2}_{a+1}$\/. 
We observe that our bound (\ref{bound-mult}) is always better than Kirfel-Pellikaan bound (\ref{bound-KP}) and the Griesmer order bound (\ref{bound-GOB}). 
Additionally in this case, all of the values but one for our bound (\ref{bound-mult}) coincide exactly with the value of the second Feng-Rao distance. 

Next, consider the case $\Gamma = H_{2,4} = \langle 8, 12, 17 \rangle$ with genus $g=28$. 
Again for the majority of the values of $a$ our bound (\ref{bound-mult}) coincides exactly with the value of the second Feng-Rao distance. 
Additionally for all values of $a$ our bound improves upon the bounds (\ref{bound-KP}) and (\ref{bound-GOB}).  This is apparent for the values displayed in Table \ref{table:GH-24}.  Note that for the sake of brevity we omit displaying most of the columns in the table. 

We obtained similar results for higher values of $q$ and $r$ with the aid of \texttt{GAP}.

\begin{center}
\begin{table*}[!t]
\begin{tabular}{|c|cccccccccccc|}
\hline 
$a$ & 12 & 13 & 14 & 15 & 16 & 17 & 18 & 19 & 20 & 21 & 22 & 23  \\
\hline 
\hline 
$\delta_{a+2}$ & 4 & 4 & 6 & 6 & 8 & 8 & 9 & 10 & 12 & 12 & 13 & 13 \\
\hline 
$GOB(a+1):=\delta_{a+1}+\left\lceil\delta_{a+1}/q^r\right\rceil$ & 5 & 5 & 5 & 7 & 7 & 9 & 9 & 11 & 12 & 14 & 14 & 15 \\
\hline 
$a+1-2g+E_2$ & 6 & 7 & 8 & 9 & 10 & 11 & 12 & 13 & 14 & 15 & 16 & 17 \\
\hline 
$\delta^{2}_{a+1}$ & 6 & 8 & 8 & 9 & 10 & 11 & 12 & 13 & 14 & 15 & 16 & 17  \\
\hline 
\end{tabular}
\caption{Results for the case $H_{2,3}$.}\label{table:GH-23}
\end{table*}
\end{center}

\begin{center}
\begin{table*}[!t]
\begin{tabular}{|c|cccccccccccccccc|}
\hline 
$a$ & 56 & 57 & 58 & 59 & 60 & 61 & 62 & $\cdots$ & 104 & 105 & 106 & 107 & 108 & 109 & 110 & 111 \\
\hline 
\hline 
$\delta_{a+2}$ & 8 & 8 & 8 & 8 & 8 & 8 & 12 & $\cdots$ & 51 & 52 & 53 & 54 & 56 & 56 & 57 & 57 \\
\hline 
$GOB(a+1)$ & 9 & 9 & 9 & 9 & 9 & 9 & 9 & $\cdots$ & 54 & 55 & 56 & 57 & 58 & 60 & 60 & 61 \\
\hline 
$a+1-2g+E_2$ & 10 & 11 & 12 & 13 & 14 & 15 & 16 & $\cdots$ & 58 & 59 & 60 & 61 & 62 & 63 & 64 & 65 \\
\hline 
$\delta^{2}_{a+1}$ & 12 & 12 & 12 & 16 & 16 & 16 & 16 & $\cdots$ & 58 & 59 & 60 & 61 & 62 & 63 & 64 & 65 \\
\hline 
\end{tabular}
\caption{Results for the case $H_{2,4}$.}\label{table:GH-24}
\end{table*}
\end{center}
\end{ex}

In the next example we use Theorem \ref{thm-Suzuki} to obtain the bound 
\begin{equation}\label{bound-Suzuki}
{\mathrm d}_{2}(C_{a}) \geq \delta_{FR}^{2}(a+1) \geq a+2-2g + E_2 = a+2-2g + (p^{2n+1}-p^{2n}+p^{n}).
\end{equation}
 These codes are defined solely in the case where characteristic $p=2$; see~\cite{suzuki, gen-suzuki} for details.

\begin{ex}\label{ex:Suzuki}

Consider now the Suzuki semigroup from Section \ref{sec:Suzuki} for the case $p=2$ 
\[
\Gamma= S_{2,n} = \langle 2^{2n+1},2^{2n+1}+2^{n},2^{2n+1}+2^{n+1},2^{2n+1}+2^{n+1}+1\rangle. 
\]
 As noted earlier this semigroup is not telescopic. According to Theorem \ref{thm-Suzuki}, the second Feng-Rao number is $\mathrm E(\Gamma, 2)=2^{2n+1}-2^{2n}+2^{n}$. 

Consider the classical Suzuki curve with $n=1$, with Weierstrass semigroup $\Gamma=\langle 8,10,12,13\rangle$. 
This semigroup is free with genus $g=14$, conductor $c=28$, and second Feng-Rao number $E_{2}=6$. 
As is apparent in Table \ref{ex:Suzuki1}, our bound is better than both (\ref{bound-KP}) and (\ref{bound-GOB}).  Additionally we see that our bound equals the 
second Feng-Rao distance for a majority of the values. Note that in computing the Griesmer order bound we divide by the size of the finite field i.e. $2^{2n+1}$. 

We also performed computations for higher $n$, obtaining similar results with large tables. 
\begin{center}
\begin{table*}[!t]
\begin{tabular}{|c|cccccccccccccc|}
\hline 
$a$ &  28 & 29 & 30 & 31 & 32 & 33 & 34 & 35 & 36 & 37 & 38 & 39 & 40 & 41 \\
\hline 
\hline 
$\delta_{a+2}$ & 8 & 8 & 8 & 8 & 8 & 8 & 10 & 10 & 12 & 12 & 13 & 16 & 16 & 16 \\
\hline 
$GOB(a+1)$ & 7 & 9 & 9 & 9 & 9 & 9 & 9 & 12 & 12 & 14 & 14 & 15 & 18 & 18 \\
\hline 
$a+1-2g+E_2$ & 8 & 9 & 10 & 11 & 12 & 13 & 14 & 15 & 16 & 17 & 18 & 19 & 20 & 21 \\
\hline 
$\delta^{2}_{a+1}$ & 10 & 11 & 12 & 12 & 12 & 14 & 14 & 16 & 16 & 17 & 18 & 19 & 20 & 22 \\
\hline 
\hline 
\hline
$a$ & 42 & 43 & 44 & 45 & 46 & 47 & 48 & 49 & 50 & 51 & 52 & 53 & 54 & 55 \\
\hline 
\hline 
$\delta_{a+2}$ & 18 & 18 & 20 & 20 & 21 & 22 & 23 & 24 & 25 & 26 & 28 & 28 & 29 & 29 \\
\hline 
$GOB(a+1)$ & 18 & 21 & 21 & 23 & 23 & 24 & 25 & 26 & 27 & 29 & 30 & 32 & 32 & 33 \\
\hline 
$a+1-2g+E_2$ & 22 & 23 & 24 & 25 & 26 & 27 & 28 & 29 & 30 & 31 & 32 & 33 & 34 & 35 \\
\hline 
$\delta^{2}_{a+1}$ & 22 & 24 & 24 & 25 & 26 & 27 & 28 & 29 & 30 & 31 & 32 & 33 & 34 & 35 \\
\hline 
\end{tabular}
\caption{Results for the Suzuki semigroup $S_{2,1}$.}\label{ex:Suzuki1}
\end{table*}
\end{center}
\end{ex}

In conclusion, the bound for the second Hamming weight based on the second Feng-Rao number is better than those given by Kirfel-Pellikaan in~\cite{K-P} and 
the Griesmer order bound introduced in~\cite{D}, for AG codes coming from both generalized Hermitian curves and Suzuki curves. Additionally, recall that this bound 
equals the one given by the actual second Feng-Rao distance in most cases and specifically for all $a\geq c$.


\begin{thebibliography}{50}

\bibitem[A]{apery} R. Ap\'ery,  Sur les branches superlin\'eaires des courbes alg\'ebriques,  C. R. Acad. Sci. Paris, {\bf 222} (1946), 1198-1200. 

\bibitem[AG1]{a-g} A. Assi, P. A. Garc\'{\i}a-S\'anchez, Constructing the set of complete intersection numerical semigroups with a given Frobenius number, Applicable Algebra in Engineering, Communication and Computing, \textbf{24} (2013), 133-148. 

\bibitem[AG2]{a-g-b} A. Assi, P. A. Garc\'{\i}a-S\'anchez, Numerical semigroups and applicactions, RSME Springer series \textbf{1}, Springer, 2016.

\bibitem[AGO]{gluings} A. Assi, P. A. Garc\'{\i}a-S\'anchez, I. Ojeda, Frobenius vectors, Hilbert series and Gluings of Affine Semigroups, J. Commutative Algebra \textbf{7} (2015), 317 -335. 

\bibitem[BC]{B-C} J. Bertin, P. Carbonne, Semi-groupes d'entiers et   application aux branches, J. Algebra {\bf 49} (1977), 81-95. 

\bibitem[CGM]{cmp} A. Ciolan, P. A. Garc\'ia-S\'anchez, P. Moree, Cyclotomic numerical semigroups, Max-Plank Institute for Mathematics report 2014-64, also arXiv:1409.5614. 

\bibitem[DGM]{numericalsgps} M. Delgado, P. A. Garc\'{\i}a-S\'{a}nchez, J. Morais, \lq\lq NumericalSgps\rq\rq, A GAP package for numerical semigroups, Version 1.0.1  (2015), (Refereed GAP package), \url{http://www.gap-system.org}. 

\bibitem[DFGL1]{intervalos} M. Delgado, J. I. Farr\'{a}n, P. A. Garc\'{\i}a-S\'{a}nchez, D. Llena, On the generalized Feng-Rao numbers of numerical semigroups generated by intervals, Math. Comput. \textbf{82} (2013), 1813-1836. 

\bibitem[DFGL2]{D} M. Delgado, J. I. Farr\'an, P. A. Garc\'ia-S\'anchez, D. Llena, On the Weight Hierarchy of Codes Coming From Semigroups With Two Generators, IEEE Trans. Inf. Theory {\bf 60-1} (2014), 282 -295. 

\bibitem[D]{delorme} C. Delorme, Sous-mono\"{\i}des d'intersection compl\`ete de $\mathbb N$, Ann. Scient. \'Ecole Norm. Sup. (4), \textbf{9} (1976), 145-154. 

\bibitem[FG]{inductivos} J. I. Farr\'{a}n, P. A. Garc\'{\i}a-S\'{a}nchez, The second Feng-Rao number for codes coming from inductive semigroups, IEEE Transactions on Information Theory \textbf{61} (2015), 4938-4947. 

\bibitem[FM]{F-M} J. I. Farr\'an, C. Munuera, Goppa-like bounds for the generalized Feng-Rao distances. International Workshop on Coding and Cryptography (WCC 2001) (Paris). Discrete Appl. Math. \textbf{128} (2003), no. 1, 145-156. 

\bibitem[FR]{f-r} G.L. Feng and T.R.N. Rao, Decoding algebraic-geometric codes up to the designed minimum distance, IEEE Trans. Inform. Theo\-ry \textbf{39} (1993), 37-45. 

\bibitem[GAP]{gap} 
The GAP~Group, GAP -- Groups, Algorithms, and Programming, 
Version 4.7.5,  
2014, 
\url{http://www.gap-system.org}. 

\bibitem[GL]{l-gs} P. A. Garc\'{\i}a-S\'anchez, M. J. Leamer, Huneke-Wiegand Conjecture for complete intersection numerical semigroup rings, J. Algebra \textbf{391} (2013), 114-124. 

\bibitem[HP]{H-P} P. Heijnen and R. Pellikaan,  Generalized Hamming weights of $q$-ary Reed-Muller codes, IEEE Trans. Inform. Theory \textbf{44} (1998), 181-197. 

\bibitem[HKM]{HKM} T. Helleseth, T. Kl\o ve and J. Mykkleveit, The weight distribution of irreducible cyclic codes with block lengths $n_{1}((q^{l}-1)/N)$, Discrete Math., vol. 18 (1977), 179-211. 

\bibitem[HvLP]{HvLP} T. H\o holdt, J.H. van Lint and R. Pellikaan, Algebraic Geometry codes, in Handbook of Coding Theory, V. Pless, W.C. Huffman and R.A. Brualdi, Eds., vol. 1, Elsevier, Amsterdam (1998), 871-961. 

\bibitem[KP]{K-P} C. Kirfel and R. Pellikaan, The minimum distance of codes in an array coming from telescopic semigroups, IEEE Trans. Inform. Theory \textbf{41} (1995), 1720-1732. 

\bibitem[M]{suzuki} G. Matthews, Codes from the Suzuki function field. IEEE Trans. Inform. Theory \textbf{50} (2004), 3298-3302. 

\bibitem[MR]{gen-suzuki} G. Matthews and R. Robinson, A variant of the Frobenius problem and generalized Suzuki semigroups, Integers: Electronic Journal Of Combinatorial Number Theory \textbf{7} (2007), A26. 

\bibitem[MST]{castle} C. Munuera, A. Sep\'ulveda and F. Torres, Algebraic Geometry codes from Castle curves, in Coding Theory and Applications 
\textbf{5228} of the series Lecture Notes in Computer Science, 117-127. 

\bibitem[R]{rosales} J. C. Rosales, On presentations of subsemigroups of ${\mathbb N}^n$, Semigroup Forum \textbf{55} (1997), no. 2, 152-159. 

\bibitem[RG]{ns} {J. C.} Rosales and {P. A.} Garc\'{\i}a-S\'anchez, {Numerical Semigroups}, Developments in Mathematics, \textbf{20}. Springer, New York, 2009. 

\bibitem[RGGB]{kunz-coor} J. C. Rosales, P. A. Garc\'{\i}a-S\'anchez, J. I. Garc\'{\i}a-Garc\'{\i}a and M. B. Branco, Systems of inequalities and numerical semigroups, J. Lond. Math. Soc. \textbf{65} (2002), 611-623.

\bibitem[W]{Wei} V. Wei, Generalized Hamming weights for linear codes, IEEE Trans. Inform. Theory {\bf 37} (1991), 1412-1428. 

\bibitem[Z]{zar} O. Zariski, Le probl\`eme des modules pour les courbes planes, Hermann, 1986. 

\end{thebibliography}
\end{document}